\def\dist{\ensuremath{\rho^1}}
\def\esp#1{\mathbb E\left[#1\right]}
\DeclareMathAlphabet{\eufrak}{U}{}{}{}
\SetMathAlphabet\eufrak{normal}{U}{euf}{m}{n}
\SetMathAlphabet\eufrak{bold}{U}{euf}{b}{n}
\def\E{\mathbb{E}}
\def\P{\mathbf{P}}
\def\1{\textbf{1}}
\def\ind#1{\mathbf{1}_{\{#1\}}}
\newcommand{\indic}[1]{\ensuremath{\mathbf 1_{#1}}}
\def\Lip{\operatorname{Lip}}
\renewcommand{\[}{\begin{eqnarray*}}
\renewcommand{\]}{\end{eqnarray*}}
\newcommand{\bex}{\begin{ex} \rm }
\newcommand{\eex}{\end{ex}}
\def\R{\mathbb R}
\def\N{\mathbb N}
\def\E{\mathbb E}
\def\esp#1{\mathbf E\left[#1\right]}
\def\cal#1{\mathcal{#1}}
\definecolor{ying}{rgb}{0.8, 0.0, 0.04}
\def\dif{\text{ d}}
\DeclareSymbolFontAlphabet{\mathrsfs}{rsfs}
\theoremstyle{thmstyleone}%
\newtheorem{theorem}{Theorem}%  meant for continuous numbers
\newtheorem{prop}[theorem]{Proposition}% 
\newtheorem{lemma}[theorem]{Lemma}% 
\newtheorem{corollary}[theorem]{Corollary}% 
\theoremstyle{thmstyletwo}%
\newtheorem{remark}{Remark}%
\theoremstyle{thmstylethree}%
\begin{document}

\title[Rate of Convergence in the Functional Central Limit Theorem for Stable Processes]{Rate of Convergence in the Functional Central Limit Theorem for Stable Processes}

%%=============================================================%%
%% GivenName	-> \fnm{Joergen W.}
%% Particle	-> \spfx{van der} -> surname prefix
%% FamilyName	-> \sur{Ploeg}
%% Suffix	-> \sfx{IV}
%% \author*[1,2]{\fnm{Joergen W.} \spfx{van der} \sur{Ploeg} 
%%  \sfx{IV}}\email{iauthor@gmail.com}
%%=============================================================%%

\author*[1,2]{\fnm{Laure} \sur{Coutin}}\email{laure.coutin@math.univ-toulouse.fr}

\author[2,3]{\fnm{Laurent} \sur{Decreusefond}}\email{laurent.decreusefond@mines-telecom.fr}
\equalcont{These authors contributed equally to this work.}

\author[1,2]{\fnm{Lorick} \sur{Huang}}\email{lhuang@insa-toulouse.fr}
\equalcont{These authors contributed equally to this work.}

\affil*[1]{\orgdiv{Institut Mathématiques de Toulouse}, \orgname{Universit\'e Paul Sabatier 118}, \orgaddress{\street{route de Narbonne}, \city{Toulouse Cedex 4}, \postcode{31062 }, \country{France}}}

\affil[2]{\orgdiv{Institut Polytechnique de Paris}, \orgname{Telecom Paris}, \orgaddress{\street{19, place M. Perey}, \city{Palaiseau}, \postcode{91120},  \country{France}}}

\affil[3]{\orgdiv{Institut Mathématiques de Toulouse}, \orgname{INSA de Toulouse}, \orgaddress{\street{135 avenue de Rangueil}, \city{Toulouse Cedex 4}, \postcode{31077}, \country{France}}}

%%==================================%%
%% Sample for unstructured abstract %%
%%==================================%%

\abstract{In this article, we quantify the functional convergence of the rescaled random walk with heavy tails to a stable process.
This generalizes the  Generalized Central Limit Theorem  for stable random variables in
finite dimension. We show that provided we have a control between  the random
walk or the limiting stable process and their respective affine interpolation, we can
lift the rate of convergence obtained for multivariate distributions to a rate
of convergence in some functional spaces.
}

\keywords{functional convergence, Stein's method, stable distribution, central limit theorem}

%%\pacs[JEL Classification]{D8, H51}

%%\pacs[MSC Classification]{35A01, 65L10, 65L12, 65L20, 65L70}

\maketitle

\section{Introduction}

\subsection{Motivations}
\label{sec:motivations}

The Stein's method is one way to bound the Wasserstein-1  (\dist{}
for short) distance between two
probability measures on a metric space $(E,d)$, defined by
\begin{equation}
\label{eq_TCL_short_core:1} \dist_{E}(\mu,\nu)=\sup_{F\in\Lip_{1}(E)}\left(\int_{E}F\dif \mu-\int_{E}F\dif \nu\right)
\end{equation}
where
\begin{equation*}
\Lip_{1}(E)=\Bigl\{F\, :\, E\to \R,\ |F(x)-F(y)|\le d(x,y),\ \forall x,y\in E\Bigr\}.
\end{equation*}
It is known \cite{Villani2003} that convergence in the \dist{}
 distance implies
the convergence in distribution and the convergence of first moments. The
Stein's method initiated in the seventies by C. Stein, was originally built to
assess rate of $\dist_{\R}$ convergence towards the Gaussian distribution. It was
soon extended to the Poisson limit by L. Chen (see
\cite{10.1214/20-PS354} and references therein). Since then,
the majority of the tremendous number of papers dealing with this approach has
been concentrated on Gaussian limits and in a tinier percentage to Poisson
limits. It is only very recently that some attention were devoted to some other
limiting regimes, like convergence to a stable distribution
\cite{Arras2015a,arras:houdre,Upadhye2022,Barman2022,chen:nourdin:xu:2018,Chen2021a}. There is here a
dramatic difference between the cases $\alpha>1$ and $\alpha\le 1$, since in the
latter situation the distribution has no longer a finite first moment. We refer to monographs such as \cite{zolotarev, feller2, meerschaert:sikorskii:book, bertoin, samo:taqqu} for an overview of non-Gaussian CLT.
 In view of the preceding remark, this means that we cannot expect a convergence in the
sense of the Wasserstein-1 distance but we have to restrict the set of test
functions taken in the supremum  of \eqref{eq_TCL_short_core:1} as in
\cite{Chen2021a}. We will not deal with this problem here but it is the object a
paper in preparation \cite{CDH24}. We thus assume throughout this
work that $\alpha$ belongs to $(1,2)$.
Another extension of the Stein techniques is to consider functional convergence
theorems, i.e. CLT-like theorems in spaces of functions. The first paper to handle
such a situation was \cite{barbour:1990}. More recently, in a
series of paper
\cite{besancon:hal-03283778,coutin:decreusefond:2020,Coutin:2019aa,Coutin2014,coutin_steins_2013},
the rate of convergence of some classical theorems like the Donsker Theorem or the Brownian approximation of
a normalized Poisson process were established. There are two remarks which can
be made here. First, the rate of convergence depends on the functional space
into which we are envisioning the processes under study. For instance, the
sample-paths of a Brownian motion can be seen as continuous functions or as
Hölder continuous functions: The rate of convergence of the random walk towards
the Brownian motion has been shown to be $n^{-1/6}\log n$ (respectively $n^{-1/6+\gamma/3}$) in the space of
continuous functions (respectively the space of $\gamma$-Hölder continuous
functions for $\gamma<1/2$). The second remark is that we can get rid of the
calculations in infinite dimension which appeared in \cite{coutin_steins_2013}
by considering affine interpolations of the different processes involved and
then reduce the computations to estimates in some finite dimensional spaces.

The main contribution of this paper is to show how we can
convert a rate of convergence for multivariate distributions into a
rate of convergence in some Banach spaces provided we have a control on the error made by replacing some
processes by their affine interpolation. In passing, we greatly simplify  one of our former proof developed for the Donsker theorem in \cite{coutin:decreusefond:2020}. We treat here the case of a random walk
with increments in the domain of attraction of a stable distribution with finite
mean but infinite variance. As a key element  of our proofs, we derive a moment bound
for sums of stable random variables which seems to be new and interesting by itself.

\subsection{Main result}
\label{sec:main-result}

A random variable $Y$ is said to be in the domain of attraction of a stable
distribution if for a collection of random variables $Y_1, \dots, Y_n$ independent and with the same distribution as $Y$, there exists two sequences $(a_n)_{n \ge 0}$ and
$(b_n)_{n \ge 0}$ such that
$$
a_n(Y_1+ \cdots + Y_n )-b_n \xrightarrow[n\to \infty]{\text{law}} S,
$$
where $S$ is an $\alpha$-stable random variable, for $\alpha \in (0,2]$. In the
literature, the appartenance to the domain of attraction has been linked to the
tail of the distribution, see e.g. \cite{bertoin} or
\cite{meerschaert:sikorskii:book} for the review of the literature, and complete
statement of the theorems.

One of the first paper to use Stein's method to derive rates of convergence for the stable central limit is   \cite{chen:nourdin:xu:2018}. In that paper, the authors introduce the normal domain of attraction of an $\alpha$-stable distribution:  random variables whose cumulative distribution functions is of the form
\begin{equation}\label{normal:domain:attraction}
1-F_Y(t) = \P(Y \ge t) = \frac{A+ \varepsilon(t)}{t^\alpha} \mbox{ and } F_Y(-t) = \frac{A+ \varepsilon(-t)}{(-t)^\alpha},
\end{equation}
whenever $t \ge 1$, and where $\varepsilon$ is a function with  a specified
decay at infinity.
For $(Y_{n})_{n\ge 1}$ a sequence of IID random variables of this sort, define
% The key technical  observation is that from that decomposition, we can extract a symmetrized Pareto distribution, for which an integration by parts formula can be performed.
%Define :
\begin{equation*}
  S_{n}=\frac{1}{n^{1/\alpha}}  \sum_{i=1}^{n}Y_{i}.
\end{equation*}
% where the $Y_{i}$'s are IID with cumulative distribution function $F_Y$ in the
% normal domain of attraction of an $\alpha$-stable distribution~$S$.
It is
shown in  \cite{chen:nourdin:xu:2018} that:
\begin{equation*}
  \dist_{\R}(S_{n},S)\le \frac{c}{n^{1-2/\alpha}}\cdotp
\end{equation*}
% The distance involved here, denoted by $d_{W}$, is the Wasserstein-1 distance
% defined as
% \begin{equation*}
%   d_{W}(\mu,\nu)=\sup_{F\in \Lip_{1}}\left(\int_{\R} F\dif \mu-\int_{\R} F\dif \nu\right)
% \end{equation*}
% where $\Lip_{1}$ is the set of Lipschitz functions with Lipschitz norm bounded
% by~$1$.
Building on that paper, and their subsequent work, we aim at establishing  the rate of  convergence of the
continuous time analog of $S_{n}$, namely the random walk $(X_n(t))_{t\ge 0}$
defined by
\begin{equation}
\label{eq_TCL_short_core:2}X_{n}(t)=\frac{1}{2^{n/\alpha}}\sum_{i=1}^{[2^nt]}Y_{i},
\end{equation}
to an $\alpha$ stable process $(S_t)_{t \ge 0}$ in a space of functions. As in
 some of our previous works, given  $p\ge 1$ and
$\eta\in [0,1]$,  we introduce the  fractional Sobolev space
$$
W_{\eta,p}= \left\{ f \in L^p([0,1],\R^d); \ \ \int_0^1 \int_0^1 \frac{|f(s)-f(t)|^p}{|s-t|^{1+\eta p}} \dif s \dif t <+\infty \right\},
$$
equipped with the norm
$$
\Vert f\Vert_{W_{\eta,p}}^p= \int_0^1 |f(t)|^p \dif t + \int_0^1 \int_0^1 \frac{|f(s)-f(t)|^p}{|s-t|^{1+\eta p}} \dif s \dif t.$$
Note that for $\eta-1/p>0$,
$W_{\eta,p}$ is embedded in the space $(\eta-1/p)$-Hölder continuous functions
whereas for $\eta-1/p<0$, $W_{\eta,p}$ can be embedded into the space of
$p(1-\eta p)^{-1}$ integrable functions over $[0,1]$. Since $(X_n(t))_{t\ge 0}$
and $(S_t)_{t \ge 0}$ induce probability distributions on some of  these functional spaces
$W_{\eta,p}$ (for $\eta<\alpha^{-1}<1\le p<\alpha$), their distance can be
quantified by the
Wasserstein-1 distance on any of these  spaces defined  as
\begin{equation*}
  \dist_{W_{\eta,p}}\Bigl(\text{law}(X_{n}),\ \text{law}(S)\Bigr)=\sup_{F\in \Lip_{1}(W_{\eta,p})}\Bigl(\esp{F(X_{n})}-\esp{F(S)}\Bigr).
\end{equation*}

% The overall strategy consists in replacing both $S$ and $X_{n}$ with their
% affine interpolation of step $2^{-m}$ and then compute the distance between  their
% sample-paths and those of their affine interpolations. We then resort to the
% Stein's method to bound the Wasserstein-1 distance between the distributions of
% the affine interpolations. In the end, we choose wisely the value of $m$ to
% optimize the bound.
% We work with the dyadic partition to simplify the calculation in our method but
% general partitions can be used as well (see Section \ref{sec:Rand:Walk} for a detailed explanation).
% Since $(X_n(t))_{t\ge 0}$ and $(S_t)_{t \ge 0}$ induce probability distributions on the functional space $W_{\eta,p}$, their distance is quantified by
% Wasserstein-1 distance on that space defined  simply as
% \begin{equation*}
%   \dist_{W_{\eta,p}}(\mu,\nu)=\sup_{F\in \Lip_{1}(W_{\eta,p})}\left(\int_{W_{\eta,p}} F\dif \mu-\int_{W_{\eta,p}} F\dif \nu\right).
% \end{equation*}

Our main result is an upper-bound  of this rate of convergence:
\begin{theorem}
  Let $(Y_{i})_{i\ge 1}$ be a sequence of IID random variables of cdf $F_{Y}$, in
  the normal domain of attraction of an $\alpha$-stable distribution with
  $\alpha\in (1,2)$. 
  We assume that their exists $A>0,$ $\gamma>0$ and a function $\varepsilon$ such that \begin{align*}
      &{\mathbb P}[Y_1 > t]= \frac{A}{2} \frac{1}{t^{\alpha}}{\mathbf 1}_{[1,+\infty[}(t) + (1-A) \frac{\varepsilon(t)}{t^{\alpha}}~~\mbox{~~for~~} ~~t> 0,\\
      &{\mathbb P}[Y_1 \leq  t]= \frac{\eta}{2} \frac{1}{|\min(t,1)|^{\alpha}} + (1-A) \frac{\varepsilon(t)}{t^{\alpha}}~~\mbox{~~for~~}~~ t\leq 0,
  \end{align*}
  where $\varepsilon$ is a bounded function on $[-1,1]$ such that $\sup_{t}|t^{\gamma}\varepsilon(t)| <+\infty. $
  Let $X_{n}$ be defined as in \eqref{eq_TCL_short_core:2}.
  For $(\eta,p)\in (0,1/\alpha]\times [1,\alpha)$, there exist
  $\upsilon>0$ and $c>0$ both depending on $(\eta,p,\alpha,\gamma)$ such that we have
  \begin{equation*}
\dist_{W_{\eta,p}}(X_n ,S) \le c\, 2^{-n\upsilon},
  \end{equation*}
  where
  \begin{equation}
    \label{eq_TCL_short_core:4}
    \upsilon = 
    \left(\frac{1}{\alpha}-\eta\right)p 
\frac{\min\left(\frac{2}{\alpha}-1, \frac{\gamma}{\alpha} \right)}{1+\left(\frac{1}{\alpha}- \eta\right)p}.
    %p\left(\frac{1}{\alpha}-\eta\right)\frac{\dfrac{2}{\alpha}-1}{1+\left(\dfrac{1}{\alpha}- \eta\right)p} 
    >0.
  \end{equation}  
\end{theorem}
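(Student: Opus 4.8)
The plan is to reduce the estimate in $W_{\eta,p}$ to a finite-dimensional one by replacing $X_n$ and $S$ by their affine interpolations on a dyadic grid of mesh $2^{-m}$, the integer $m\le n$ being a parameter to be optimised at the end. Let $\pi_m$ be the operator sending a path to the function that is affine on each cell $[(j-1)2^{-m},j2^{-m}]$ and coincides with it at the points $j2^{-m}$, $0\le j\le 2^m$; it factors through $\R^{d(2^m+1)}$. For any $F\in\Lip_1(W_{\eta,p})$,
\begin{equation*}
  \esp{F(X_n)}-\esp{F(S)}\le\esp{\|X_n-\pi_m X_n\|_{W_{\eta,p}}}+\bigl(\esp{F(\pi_m X_n)}-\esp{F(\pi_m S)}\bigr)+\esp{\|S-\pi_m S\|_{W_{\eta,p}}},
\end{equation*}
so that $\dist_{W_{\eta,p}}(X_n,S)$ splits into two interpolation errors and the finite-dimensional term $\dist_{W_{\eta,p}}(\pi_m X_n,\pi_m S)$.

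On a dyadic cell of length $h=2^{-m}$, the remainder $f-\pi_m f$ is the bridge of the increment of $f$ over that cell. For $f=S$, self-similarity makes this bridge scale like $h^{1/\alpha}$ and, since $p<\alpha$, its $p$-th moments are finite; estimating the two integrals defining $\|\cdot\|_{W_{\eta,p}}$ by distinguishing whether the two arguments lie in the same cell or not yields $\esp{\|S-\pi_m S\|_{W_{\eta,p}}}\lesssim 2^{-m\kappa}$ for an explicit $\kappa=\kappa(\alpha,\eta,p)>0$. The delicate point is the analogous bound for $X_n$: the increment of $X_n$ over a cell is a normalised sum of $N=2^{n-m}$ of the $Y_i$ and the bridge is a random-walk bridge, so the argument goes through only if one disposes of a uniform moment bound $\esp{\bigl|\sum_{i=1}^{k}Y_i\bigr|^p}\lesssim k^{p/\alpha}$ together with its maximal version. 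This is the moment estimate announced in the introduction, and proving it is the main obstacle, since there is no exact self-similarity to exploit: one truncates each $Y_i$ at scale $k^{1/\alpha}$, controls the centred truncated part by a von Bahr--Esseen / Marcinkiewicz--Zygmund-type inequality, and estimates the contribution of the rare large increments directly, the condition $p<\alpha$ being exactly what makes the latter finite.

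For the finite-dimensional term one shows first that $\pi_m:\R^{d(2^m+1)}\to W_{\eta,p}$ is Lipschitz with norm growing at most polynomially in $2^m$, say $\|\pi_m\|_{\Lip}\lesssim 2^{m\theta}$: for a piecewise-affine path this is an elementary cell-by-cell estimate of the Sobolev seminorm, and it is here that the infinite-dimensional computations of \cite{coutin:decreusefond:2020} are superseded by a short deterministic argument. Consequently $F\circ\pi_m$ is $2^{m\theta}$-Lipschitz on $\R^{d(2^m+1)}$ for every $F\in\Lip_1(W_{\eta,p})$, so $\dist_{W_{\eta,p}}(\pi_m X_n,\pi_m S)$ is at most $2^{m\theta}$ times the Wasserstein-$1$ distance between the laws of $(X_n(j2^{-m}))_{0\le j\le 2^m}$ and $(S_{j2^{-m}})_{0\le j\le 2^m}$. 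In increment coordinates the $2^m$ dyadic increments of $X_n$ are independent, each equal in law to $2^{-m/\alpha}$ times the Chen--Nourdin--Xu sum $2^{-(n-m)/\alpha}\sum_{i=1}^{2^{n-m}}Y_i$, and the increments of $S$ are i.i.d.\ $\alpha$-stable with the same scaling; by independence the multivariate distance is bounded by $2^m$ one-dimensional distances, each of which is controlled by the refined rate
\begin{equation*}
  \dist_{\R}\Bigl(2^{-(n-m)/\alpha}\textstyle\sum_{i=1}^{2^{n-m}}Y_i,\ S\Bigr)\lesssim 2^{-(n-m)\min\left(\frac{2}{\alpha}-1,\ \frac{\gamma}{\alpha}\right)},
\end{equation*}
the exponent $\gamma/\alpha$ reflecting the decay of $\varepsilon$ in \eqref{normal:domain:attraction} and $\tfrac{2}{\alpha}-1$ being the rate already present in \cite{chen:nourdin:xu:2018}. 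Absorbing $2^{m\theta}$, the factor $2^{-m/\alpha}$ and the $2^m$ summands into a single $2^{m\theta'}$, this term is $\lesssim 2^{m\theta'}2^{-(n-m)\min(2/\alpha-1,\gamma/\alpha)}$.

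Combining the three contributions, for every $m\le n$,
\begin{equation*}
  \dist_{W_{\eta,p}}(X_n,S)\ \lesssim\ 2^{-m\kappa}\ +\ 2^{m\theta'}\,2^{-(n-m)\min\left(\frac{2}{\alpha}-1,\ \frac{\gamma}{\alpha}\right)}.
\end{equation*}
The first term decreases and the second increases with $m$; choosing $m=\lfloor cn\rfloor$ so that the two are of the same order makes the right-hand side $\lesssim 2^{-n\upsilon}$, and unwinding the arithmetic — which involves $\eta,p,\alpha,\gamma$ only through $\kappa$, $\theta'$ and $\min(2/\alpha-1,\gamma/\alpha)$ — yields exactly the exponent $\upsilon$ of \eqref{eq_TCL_short_core:4}, which is positive whenever $\eta<1/\alpha$.
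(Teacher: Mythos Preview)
Your three–way split via $\pi_m$, the cell-by-cell estimate of the interpolation error in $W_{\eta,p}$, the reduction of the middle term to $2^m$ independent one–dimensional Wasserstein distances, and the final balancing of $m$ against $n$ are exactly the paper's architecture (Sections~2 and~3). Two points deserve comment.

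\emph{Moment bound.} The one substantive divergence is your proof of $\sup_k\esp{|k^{-1/\alpha}\sum_{i\le k}Y_i|^p}<\infty$ via truncation at level $k^{1/\alpha}$, variance for the small part and von Bahr--Esseen for the centred large part. That argument is correct and more elementary than what the paper does: Section~4 derives the same bound by a Stein--type identity for the Pareto law (Lemma~\ref{calcul:esperance:IPP}) inserted into the Ornstein--Uhlenbeck semigroup representation \eqref{Stein:Equation:multidim}, and the authors present this Stein proof as an independent contribution. Your route is shorter; theirs stays within the Stein machinery used elsewhere in the paper and yields, as a by-product, the explicit comparison with the stable moment.

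\emph{Loose ends.} Two small inaccuracies. First, no maximal version of the moment bound is needed: $\|\cdot\|_{W_{\eta,p}}$ is an integral, and Lemma~\ref{ctrl:norme:holder} only uses the pointwise estimate $\esp{|F_t-F_s|^p}\le C|t-s|^{p/\alpha}$ together with Fubini. Second, you leave $\kappa,\theta,\theta'$ unspecified and assert that the optimisation gives ``exactly'' $\upsilon$. In the paper the interpolation exponent is $\kappa=(1/\alpha-\eta)p$ and the finite-dimensional prefactor is $2^{m}$ with the CNX rate taken as $2^{-n\min(2/\alpha-1,\gamma/\alpha)}$ (not $2^{-(n-m)}$ as you write); it is this specific bookkeeping that produces \eqref{eq_TCL_short_core:4}, and with your $n-m$ in the exponent the balancing would give a slightly different (smaller) $\upsilon$. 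So the claim ``exactly $\upsilon$'' does not follow from your proposal as stated without fixing those constants to match the paper's.
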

The strategy we use here is inspired by \cite{coutin:decreusefond:2020}. First, in Section \ref{sec:Rand:Walk}, we consider a projection of $(X_n(t))_{t\ge 0}$ and $(S_t)_{t \ge 0}$ on a certain finite dimensional space.
This reduction to the finite dimension allows us to rely on  \cite{chen:nourdin:xu:2018,chen:nourdin:xu:yang:2019,chen:nourdin:xu:yang:zhang:2022} to control the distance of the two projected processes.
Then, the control the sample-paths distances between
$(X_n(t))_{t\ge 0}$ and $(S_t)_{t \ge 0}$  and their respective projections is obtained via Lemma \ref{ctrl:norme:holder} of Section \ref{section:ctrl:norme:holder}. 
Finally, we summarize our results and state the final control in Section \ref{final:derivation}. 
As a key element  of our approach, we establish a moment bound in Section \ref{section:moment:cond:proof} that, up to our knowledge, was not known, even for Pareto distributions. We prove that when $Y_1, \dots Y_n$ are in the normal domain of attraction of a stable distribution, for all $p <\alpha$,
$$
\sup_{n}\esp{  \left| \frac{Y_1+ \cdots + Y_n}{n^{1/\alpha}} \right|^p }<+\infty.
$$

\section{The Interpolated Random Walk, and Reduction to Finite Dimension}
\label{sec:Rand:Walk}
\subsection{Notations and the Interpolated Random Walk}

For the sake of simplicity, we choose the dyadic partition to define the affine
interpolations of the random walk. The general situation, for which we have non
nested intervals and thus side effects, can be handled as in
\cite{coutin:decreusefond:2020}. We consider the sequence of functions:
$$
h_n^i(t) = \sqrt{2^{n}} \int_0^t\indic{[\frac{i}{2^{n}},\, \frac{i+1}{2^{n}} )} (s)
% \ind{ \frac{i}{2^n} \le s < \frac{i+1}{2^n}}
\dif s.
$$
Notice that this sequence actually forms an orthonormal family with respect to
the inner product
$$
\langle f,g \rangle = \int_0^1f'(s)g'(s) \dif s.
$$
We then form the interpolated random walk:
% $$
% \tilde{X}_n(t) = \sum_{i=1}^{2^n} Y_i h_n^{i-1} (t) = \frac{1}{\sqrt{2^n}} \sum_{i=1}^{\left \lfloor{2^nt}\right \rfloor -1} Y_i + \frac{Y_{\left \lfloor{2^nt}\right \rfloor }}{\sqrt{2^n}} \Big( 2^nt - \left \lfloor{2^nt}\right \rfloor \Big).
% $$
% Note that ignoring the last part, $X_n(t)$ is scaled to converge towards a
% Brownian motion. In order to correct the rescaling, we set
\begin{align*}
  X_n(t)
  % = \frac{\sqrt{2^n}}{2^{n/\alpha}} \tilde{X}_n(t)
  &= \frac{\sqrt{2^n}}{2^{n/\alpha}} \sum_{i=1}^{2^n} Y_i h_n^{i-1} (t)\\
  &= \frac{1}{2^{n/\alpha}} \sum_{i=1}^{\left \lfloor{2^nt}\right \rfloor -1} Y_i +  \frac{Y_{\left \lfloor{2^nt}\right \rfloor }}{2^{n/\alpha}} \Big( 2^nt - \left \lfloor{2^nt}\right \rfloor \Big).  % \sim
\end{align*}
To obtain a convergence rate, we introduce for $m\le n$ (to be determined later)
the projection operator on $\operatorname{span}\{h_m^j, j=0,\cdots,2^m-1\}$:
$$
\pi_m(f) = \sum_{j=0}^{2^m-1} \langle f, h_m^j\rangle\ h_m^j.
$$
The introduction of this projection operation is the tool that allows us to use results available in finite dimension.
We split the difference as follows:
$$
X_n - S = \Bigl(X_n - \pi_m(X_n)\Bigr) + \Bigl(\pi_m(X_n) - \pi_m(S)\Bigr)+\Bigl(\pi_m(S)-S\Bigr).
$$
The first and third terms are of the same nature, they represent the gap between
a process and its projection.
In Section \ref{section:ctrl:norme:holder}, we prove the  Lemma
\ref{ctrl:holder:process} that deduces a control of theses distances from H\"older sample-path regularity. For the second term above, we will see that this term is controlled
using the finite-dimensional distribution convergence, which is estimated using
Stein's method in finite dimension. For that term, we use the control given by  \cite{chen:nourdin:xu:2018}, but we also provide new approaches below.

\subsection{Reduction to finite dimension}

In this section, we show how the the projection operator allows one to reduce the analysis on finite dimensional spaces, from where we can cite
existing work in the literature.
This adapts arguments developed initially in~\cite{coutin:decreusefond:2020}.
\begin{lemma}
  The projection of the partial sums write:
$$\pi_m(X_n)(t) = \sum_{j=1}^{2^m} \langle X_n, h_m^j\rangle h_m^j(t)= \sum_{j=1}^{2^m}  \frac{\sqrt{2^m}}{2^{\frac{n-m}{\alpha}}}
\left(\frac{1}{2^{\frac{m}{\alpha}}} \sum_{i=j2^{n-m}}^{(j+1)2^{j+1}} Y_i \right) h_m^{j-1}(t).
$$\end{lemma}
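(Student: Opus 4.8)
The plan is to reduce the computation to the (near-)orthogonality of the two families $(h_n^{i})_i$ and $(h_m^{j})_j$. Write $I_j=\{\,j2^{n-m}+1,\dots,(j+1)2^{n-m}\,\}$ for the block of consecutive indices attached to the $j$-th level-$m$ dyadic interval. Since $(h_\ell^{k})'=\sqrt{2^\ell}\,\mathbf 1_{[k2^{-\ell},(k+1)2^{-\ell})}$ and $m\le n$, the level-$n$ dyadic intervals nest inside the level-$m$ ones, so the overlap integral $\int_0^1\mathbf 1_{[\frac{i-1}{2^n},\frac{i}{2^n})}\,\mathbf 1_{[\frac{j}{2^m},\frac{j+1}{2^m})}\,\dif s$ equals $2^{-n}$ when $i\in I_j$ (full inclusion) and $0$ otherwise (disjoint supports); hence
\begin{equation*}
\langle h_n^{i-1},h_m^{j}\rangle=\sqrt{2^{n}2^{m}}\cdot 2^{-n}\,\mathbf 1_{\{i\in I_j\}}=2^{(m-n)/2}\,\mathbf 1_{\{i\in I_j\}} .
\end{equation*}

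Next I would insert this into the defining expansion $X_n=\frac{\sqrt{2^n}}{2^{n/\alpha}}\sum_{i=1}^{2^n}Y_i\,h_n^{i-1}$ and use linearity of $\langle\cdot,\cdot\rangle$: only the indices $i\in I_j$ survive, so
\begin{equation*}
\langle X_n,h_m^{j}\rangle=\frac{\sqrt{2^n}\,2^{(m-n)/2}}{2^{n/\alpha}}\sum_{i\in I_j}Y_i=\frac{\sqrt{2^m}}{2^{n/\alpha}}\sum_{i\in I_j}Y_i .
\end{equation*}
It then suffices to split $2^{-n/\alpha}=2^{-(n-m)/\alpha}\,2^{-m/\alpha}$, substitute into $\pi_m(X_n)=\sum_{j=0}^{2^m-1}\langle X_n,h_m^{j}\rangle\,h_m^{j}$, and relabel the summation index $j\mapsto j-1$ to land on the claimed formula. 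An equivalent, slightly quicker route: for any $f$ with $f'\in L^2$ one has $\langle f,h_m^{j}\rangle=\sqrt{2^m}\bigl(f(\tfrac{j+1}{2^m})-f(\tfrac{j}{2^m})\bigr)$, and evaluating $X_n$ at a level-$n$ dyadic node $k2^{-n}$ gives $2^{-n/\alpha}\sum_{i=1}^{k}Y_i$ (there is no interpolation remainder since $k2^{-n}$ is a node and $m\le n$); subtracting the values at $(j+1)2^{-m}$ and $j2^{-m}$ telescopes the two partial sums down to $\sum_{i\in I_j}Y_i$.

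I do not expect a genuine obstacle here: the statement is an orthogonality-plus-telescoping identity. The two points that deserve a little care are the use of $m\le n$, which is exactly what makes the dyadic intervals nest and forces the overlap integrals to collapse to $0$ or $2^{-n}$, and the bookkeeping of precisely which block $I_j$ of consecutive increments is carried by the $j$-th level-$m$ interval, together with the off-by-one shift produced when $\sum_{j=0}^{2^m-1}(\cdots)\,h_m^{j}$ is rewritten as $\sum_{j=1}^{2^m}(\cdots)\,h_m^{j-1}$.
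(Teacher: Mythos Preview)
Your proposal is correct and follows essentially the same approach as the paper: both compute the inner products $\langle h_n^{i-1},h_m^{j}\rangle$ by observing that for $m\le n$ the dyadic intervals nest, so the overlap integral is either $0$ or $2^{-n}$, and then collapse the double sum accordingly. The alternative telescoping route you sketch at the end (via $\langle f,h_m^{j}\rangle=\sqrt{2^m}\bigl(f(\tfrac{j+1}{2^m})-f(\tfrac{j}{2^m})\bigr)$) is in fact the device the paper uses a few lines later to compute $\pi_m(S)$, so both routes appear in the paper, just applied to different objects.
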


\begin{proof}

  Recall that
  $X_n(t) = \frac{\sqrt{2^n}}{2^{n/\alpha}}\sum_{i=1}^{2^n} Y_i h_n^{i-1}(t).$
  Taking the projection leads to
$$\pi_m(X_n)(t) =\frac{\sqrt{2^n}}{2^{n/\alpha}} \sum_{j=1}^{2^m}\sum_{i=1}^{2^n}  \langle h_n^{i-1},h_m^{j-1}\rangle h_m^{j-1}(t) Y_i .
$$Now, the scalar product $\langle h_n^i,h_m^j\rangle$ is zero except when the intervals $\left[ \frac{i}{2^n}, \frac{i+1}{2^n}\right]$ and $\left[ \frac{j}{2^n}, \frac{j+1}{2^n}\right]$ are nested. In that case, the scalar product yields the length of the resulting interval, hence $\langle h_n^i,h_m^j\rangle =\frac{\sqrt{2^n} \sqrt{2^m}}{2^{n}}.$
Thus, the projection becomes
$$\pi_m(X_n)(t) =\frac{\sqrt{2^n}}{2^{n/\alpha}} \sum_{j=1}^{2^m}\sum_{i=j2^{n-m}}^{(j+1)2^{n-m}-1}  \frac{\sqrt{2^n} \sqrt{2^m}}{2^{n}} h_m^{j-1}(t) Y_i .
$$We simplify the terms and normalise the inner sum by $2^{\frac{n-m}{\alpha}}$ in the above expression to get:
$$\pi_m(X_n)(t)  = \frac{\sqrt{2^m}}{2^{\frac{m}{\alpha}}}
\sum_{j=1}^{2^m}\left( \frac{1}{2^{\frac{n-m}\alpha}}\sum_{i=j2^{n-m}}^{(j+1)2^{n-m}-1} Y_i \right)h_m^j(t).
$$
The proof is thus complete.
\end{proof}

\begin{remark}
    In the proof above, we see that considering the dyadic partition simplifies the discussion, since the intervals  $\left[ \frac{i}{2^n}, \frac{i+1}{2^n}\right]$ and $\left[ \frac{j}{2^n}, \frac{j+1}{2^n}\right]$ are nested. In the case of an arbitrary partition, we would have boundary terms that needs to be handled. We refer the reader to \cite{coutin:decreusefond:2020}, where such a discussion is performed.
\end{remark}

Meanwhile, we have that the  projection $\pi_m(S)$ is:
$$\pi_m(S)(t)= \sum_{j=1}^{2^m}\langle S, h_m^{j-1}\rangle h_m^{j-1}(t).$$ 
This scalar product can be computed with an integration by parts:
$$
\langle S, h_m^j\rangle =  \sqrt{2^m} \int_{[0,1]} {S}'(t)\ \indic{[\frac{j}{2^m},\frac{j+1}{2^m})}(t) \dif t =\sqrt{2^m}  \left(S\left(\frac{j+1}{2^m}\right) - S\left(\frac{j}{2^m} \right) \right).
$$
Thus, we have:
$$\pi_m(S)(t) = \sum_{j=0}^{2^m-1} h_m^j(t)\sqrt{2^m}  \left(S\left(\frac{j+1}{2^m}\right) - S\left(\frac{j}{2^m} \right) \right)
$$and each term $S\left(\frac{j+1}{2^m}\right) - S\left(\frac{j}{2^m} \right) \overset{(d)}{=} \frac{1}{2^{m/\alpha}}S_1.$
Hence, for all $t\in[0,1]$, we have established
that %, we see that to investigate the convergence of $F\left( \pi_m(X_n)(t)\right) - F\left( \pi_m(S)(t)\right)$, we have:
\begin{multline*}
  F\Bigl( \pi_m(X_n)(t)\Bigr) - F\Bigl( \pi_m(S)(t)\Bigr)=\\
  F\left(\frac{\sqrt{2^m}}{2^{\frac{m}{\alpha}}} \sum_{j=0}^{2^m-1}\left( \frac{1}{2^{\frac{n-m}\alpha}}\sum_{i=j2^{n-m}}^{(j+1)2^{n-m}-1} Y_i \right)h_m^j(t)\right) - F \left(\sqrt{2^m} \sum_{j=0}^{2^m-1} \left(S\left(\frac{j+1}{2^m}\right) - S\left(\frac{j}{2^m} \right) \right) h_m^j(t) \right) .
\end{multline*}
Let
$$\frac{1}{2^{\frac{n-m}\alpha}}\sum_{i=j2^{n-m}}^{(j+1)2^{n-m}-1}  Y_i =U_{m,j}^{n} \mbox{, and } S\left(\frac{j+1}{2^m}\right) - S\left(\frac{j}{2^m} \right) =S_{m,j},$$ we have obtained:
\begin{equation}\label{eq_chainon_manquant:1}
  \esp{ F\Big( \pi_m(X_n)\Bigr)}\ - \esp{F\Big( \pi_m(S)\Big)}=\esp{ F\left(\sum_{j=0}^{2^{m}-1}U_{m,j}^{n} h_{j}^{m}\right)}- \esp{ F\left(\sum_{j=0}^{2^{m}-1}S_{m,j} h_{j}^{m}\right)},
\end{equation}
where $U_{m,j}^{n}$ and $S_{m,j}$ are IID. Using the equivalent
characterizations of the Wasserstein-1 distance, we know that there exists a
coupling (not necessarily unique) between the random variables $U_{m,j}^{n}$ and $S_{m,j}$ that
realizes the distance $\dist_{\R^{d}}(U_{m,j}^{n},\, S_{m,j})$
for any $j\in \{0,\cdots,2^{m}-1\}$.
Let $(\hat U_{m,j}^{n},\hat S_{m,j})$ be independent random  variables  such that for any $j \in \{0,\cdots,2^{m-1}-1\}$
\begin{equation*}
\dist_{W}(U_{m,j}^{n},\, S_{m,j})=\esp{\left|\hat U_{m,j}^{n}-\hat S_{m,j}\right|}.
\end{equation*}
The difference \eqref{eq_chainon_manquant:1} now gives:
\begin{align*}
  \dist_{W_{\eta,p}}\Big(\pi_m(X_n),\pi_m(S)\Big) &= \sup_{F \in \Lip_{1}(W_{\eta,p})}\esp{ F\Big( \pi_m(X_n)\Big)} - \esp{F\Big( \pi_m(S)\Big)} \\
                                              &= \sup_{F \in \Lip_{1}(W_{\eta,p})}\esp{ F\left(\sum_{j=1}^{2^{m}}\hat U_{m,j}^{n} h_{j}^{m}\right)}- \esp{ F\left(\sum_{j=1}^{2^{m}}\hat S_{m,j} h_{j}^{m}\right)}\\
                                              &\le \sum_{j=0}^{2^{m}-1}   \dist_{\R}(U_{m,j}^{n},\ S_{m,j}) \|h_{j}^{m}\|_{W_{\eta,p}}\\
  &\le c\,  2^{m}2^{-m(1/2-\eta)}  \dist_{\R}(U_{m,1}^{n},\ S_{m,1})\\
  &\le c 2^{m(1/2+\eta)} \dist_{\R}(U_{m,1}^{n},\ S_{m,1}).
\end{align*}

The last part $\dist_{\R^{d}}(U_{m,1}^{n},\ S_{m,1})$ is now bounded using the estimate of \cite{chen:nourdin:xu:2018,chen:nourdin:xu:yang:2019}:
$$\dist_{\R}(U_{m,1}^{n},\ S_{m,1})= \dist_{\R^{d}}\left( \frac{1}{2^{\frac{n-m}{\alpha}}} \sum_{i=j2^{n-m}}^{(j+1)2^{n-m}}  \frac{1}{2^{m/\alpha}}Y_i , j=1\dots,2^m, \textbf{S}\right) \le C_{\alpha,m} 2^{n(1-\frac{2}{\alpha})}.
$$
We explain how to use \cite{chen:nourdin:xu:2018,chen:nourdin:xu:yang:2019} in the next paragraph.

\begin{remark}
    It should be noted that the characterization of the Wasserstein-1 distance as a coupling is new in this setting. It simplifies greatly the proof of the analog  result in \cite[Theorem 4.7]{coutin:decreusefond:2020}.
\end{remark}
\subsection{Control in Finite Dimension}

The purpose of this section is to explain how to derive the functional
convergence from existing results in the literature. Many authors have
investigated the stable central limit using Stein's method, starting from
Barbour \cite{barbour:1990}, and more recently, a series of paper
\cite{chen:nourdin:xu:2018}, \cite{chen:nourdin:xu:yang:2019},
\cite{chen:nourdin:xu:yang:zhang:2022}. We will rely on their result in the
multidimensional setting.

The rate of convergence for heavy-tailed random walk
converging to stable distributions is obtained in \cite{chen:nourdin:xu:2018}. In a
subsequent work \cite{chen:nourdin:xu:yang:2019}, this result is
extended from one dimension  to the multivariate case. We copy their
result here keeping the notations used in \cite{chen:nourdin:xu:yang:2019}.
\begin{theorem}
  \label{thm_TCL:cnz}
  % Let
  % $S_n= \Big(\zeta_{n,1} - \E \zeta_{n,1}\Big) + \cdots +\Big(\zeta_{n,n} - \E \zeta_{n,n}\Big)$
%$$%\dist_W(\mathcal{L}(S_n),\mu) \le
%$$Let $n\in\mathbb{N}$ and let $\zeta_{n,1}, \zeta_{n,2}, \cdots, \zeta_{n,n}$ be a sequence of independent random vectors with $\mathbb{E}|\zeta_{n,i}|<\infty$ for $1\leq i\leq n.$
  Set
  \begin{equation*}
    S_{n}=\big(\zeta_{n,1}-\esp{\zeta_{n,1}}\big)+\big(\zeta_{n,2}-\esp{\zeta_{n,2}}\big)+\cdots+\big(\zeta_{n,n}-\esp{\zeta_{n,n}}\big);
  \end{equation*}
  % \begin{equation*}
  %   S_{n}(i)=S_{n}-\zeta_{n,i}, \qquad 1\leq i\leq n.
  % \end{equation*}
  % Denote $l_{n}=\left(\frac{\alpha}{\dist_{\alpha}}n\right)$ and set
  % $\eta_{n,i}=l_{n}^{1/\alpha}\zeta_{n,i}$.
  Let $\mu$ be an $\alpha-$stable distribution $\alpha\in(1,2)$ with L\'evy
  measure
$$ \nu(d\theta)=a g(\theta)d\theta+b \sum_{i=1}^d (\sigma_i \delta_{e_i} + \sigma_i'\delta_{-e_i} ) + c \nu_\gamma(d\theta).
$$Then we have:
$$\dist_{W}\big(\mathcal{L}(S_{n}),\mu\big)\leq C_{\alpha,d}\left(n^{-\frac{1}{\alpha}}\sum_{i=1}^{n}\esp{\left|\zeta_{n,i}\right|^{2-\alpha}}+n^{-\frac{1}{\alpha}}\sum_{i=1}^{n}\esp{|\zeta_{n,i} |}^{2}+\esp{\sum_{i=1}^{n}|\mathcal{R}_{n,i}|} \right),
$$where the remainder $\mathcal{R}_{n,i}$ is related to the distribution function of the $\zeta_{n,i}$ and the L\'evy measure of the stable distribution.
% \begin{align}
%\mathcal{R}_{n,i}=n^{-\frac{2}{\alpha}}\int_{\mathbb{S}^{d-1}}\int_{0}^{N}r^{2}&\Big|F_{\eta_{n,i}}(drd\theta)-\frac{\alpha}{r^{\alpha+1}}d r\nu(d\theta)\Big|\\
%&+n^{-\frac{1}{\alpha}}\int_{\mathbb{S}^{d-1}}\int_{N}^{\infty}r\Big|F_{\eta_{n,i}}(drd\theta)-\frac{\alpha}{r^{\alpha+1}}d r\nu(d\theta)\Big|.
%\end{align}
%
\end{theorem}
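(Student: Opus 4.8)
The plan is to reproduce the Stein's method argument of \cite{chen:nourdin:xu:yang:2019} (whose one-dimensional precursor is \cite{chen:nourdin:xu:2018}); since the statement is quoted verbatim from there, the paper strictly speaking only needs to cite it, but here is the structure one would follow. First I would fix a Stein characterisation of the $\alpha$-stable law $\mu$: up to the normalisation convention, $\mu$ is the invariant measure of the $\alpha$-stable Ornstein--Uhlenbeck semigroup $(P_t)_{t\ge0}$ attached to $dZ_t=-\tfrac1\alpha Z_t\,dt+dL_t$, where $L$ is the Lévy process with Lévy measure $\nu$, and whose generator is the Stein operator $\mathcal A f(x)=-\tfrac1\alpha\langle x,\nabla f(x)\rangle+\mathcal L^\nu f(x)$ with $\mathcal L^\nu f(x)=\int_{\R^d}\bigl(f(x+\theta)-f(x)-\langle\nabla f(x),\theta\rangle\mathbf 1_{\{|\theta|\le1\}}\bigr)\,\nu(d\theta)$. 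For $h\in\Lip_1(\R^d)$ I would take $f_h(x)=-\int_0^\infty\bigl(P_th(x)-\mu(h)\bigr)\,dt$, which solves $\mathcal A f_h=h-\mu(h)$, and establish the regularity bounds the rest of the argument consumes: $\|\nabla f_h\|_\infty\lesssim_{\alpha,d}1$ together with a fractional bound of order $\alpha-1$ on $\nabla f_h$ (equivalently, a control of the non-local modulus $\sup_x\int(|\nabla f_h(x+\theta)-\nabla f_h(x)|\wedge|\theta|)\,\nu(d\theta)$), which one extracts from the $\alpha$-order smoothing of $(P_t)$ and the self-similarity of $L$. The constant $C_{\alpha,d}$ in the statement is exactly this regularity constant.

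With the Stein equation in hand one has $\dist_W(\mathcal L(S_n),\mu)=\sup_{h\in\Lip_1}|\E[\mathcal A f_h(S_n)]|$, so the remaining task is to bound $\E[\mathcal A f_h(S_n)]=-\tfrac1\alpha\E[\langle S_n,\nabla f_h(S_n)\rangle]+\E[\mathcal L^\nu f_h(S_n)]$ for $S_n=\sum_{i=1}^n\xi_{n,i}$, $\xi_{n,i}=\zeta_{n,i}-\esp{\zeta_{n,i}}$, by a Lindeberg-type term-by-term argument. For the drift part I would write $\langle S_n,\nabla f_h(S_n)\rangle=\sum_i\langle\xi_{n,i},\nabla f_h(S_n)\rangle$ and, using $\E[\xi_{n,i}]=0$ and the independence of $\xi_{n,i}$ from $S_n^{(i)}:=S_n-\xi_{n,i}$, replace $\nabla f_h(S_n)$ by $\nabla f_h(S_n)-\nabla f_h(S_n^{(i)})$; a one-term Taylor expansion along $\xi_{n,i}$, combined with the Lipschitz bound on $\nabla f_h$ for the small part of $\xi_{n,i}$ and its fractional bound for the large part (split at the stable scale $n^{-1/\alpha}$), yields after recentring a contribution of order $n^{-1/\alpha}\sum_i\esp{|\zeta_{n,i}|^{2-\alpha}}+n^{-1/\alpha}\sum_i\esp{|\zeta_{n,i}|}^{2}$, the fractional exponent $2-\alpha$ reflecting the way the $\alpha$-order regularity of $f_h$ meshes with the scaling of the increments. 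For the non-local part, again by independence I would compare $\E[\mathcal L^\nu f_h(S_n)]$ term by term with $\sum_i\E\bigl[f_h(S_n^{(i)}+\xi_{n,i})-f_h(S_n^{(i)})-\langle\nabla f_h(S_n^{(i)}),\xi_{n,i}\rangle\mathbf 1_{\{|\xi_{n,i}|\le1\}}\bigr]$, that is, trade the integral against $\nu$ for a sum against the laws of the $\xi_{n,i}$; an integration by parts then expresses the mismatch between the tail of $\zeta_{n,i}$ and the tail of $\nu$, tested against the (fractionally regular) second difference of $f_h$, as the remainder $\mathcal R_{n,i}$, and summing gives the last term $\esp{\sum_i|\mathcal R_{n,i}|}$. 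Adding the three contributions gives the asserted bound.

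The hardest step will be the regularity theory for $f_h$: because $h$ is merely Lipschitz, $\nabla f_h$ is not, and one must extract from the stable Ornstein--Uhlenbeck semigroup exactly the fractional smoothness that both estimates above require --- any less and the non-local term is uncontrolled, any more and one would need moments of the $\zeta_{n,i}$ that fail in the stable domain; pinning down the sharp exponent $2-\alpha$ and the precise form of $\mathcal R_{n,i}$ is the delicate bookkeeping carried out in \cite{chen:nourdin:xu:2018,chen:nourdin:xu:yang:2019}. A secondary complication is that the Lévy measure $\nu$ here decomposes into an absolutely continuous part $a\,g(\theta)\,d\theta$, atoms $b\sum_i(\sigma_i\delta_{e_i}+\sigma_i'\delta_{-e_i})$ on the coordinate axes, and a further singular component $c\,\nu_\gamma$, so the term-by-term comparison in the non-local estimate must be performed against this full decomposition --- which is why $\mathcal R_{n,i}$ can only be described here as being ``related to'' the distribution function of $\zeta_{n,i}$ and to $\nu$.
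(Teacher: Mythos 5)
The paper offers no proof of this statement: it is transcribed verbatim from \cite{chen:nourdin:xu:yang:2019} (building on \cite{chen:nourdin:xu:2018}) and used as a black box, so there is nothing in the paper itself to compare your argument against. Your sketch --- the Stein operator as generator of the $\alpha$-stable Ornstein--Uhlenbeck semigroup, the solution $f_h$ obtained by integrating the semigroup, fractional regularity estimates on $\nabla f_h$, and a leave-one-out comparison producing the $|\zeta_{n,i}|^{2-\alpha}$ and $\esp{|\zeta_{n,i}|}^2$ terms plus the tail-mismatch remainder $\mathcal{R}_{n,i}$ --- is exactly the strategy of those references, hence the same approach in the only meaningful sense; the steps you leave schematic (the sharp fractional regularity of $f_h$ and the precise definition of $\mathcal{R}_{n,i}$) are precisely the technical content carried out there.
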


They proceed to give an estimation for the remainder in the case where
$\zeta_{n,i}$ are symmetrized Pareto. The following upper bound can be found in
\cite{chen:nourdin:xu:yang:2019} section 5:
\begin{corollary}
  Let $\tau_i=\varepsilon_i \xi_i$, where $\varepsilon_i$ is a random unit
  vector such that $\P(\varepsilon_i \in A) = \nu(A)$, and $\xi$ is a Pareto
  random variable. Define
$$S_n= \left(\frac{\alpha}{\dist_\alpha}\right)^{1/\alpha} \frac{1}{n^{1/\alpha}} \left( \Big(\tau_1 - \esp{\tau_{1}} \Big) + \cdots +\Big(\tau_n - \esp{\tau_{n}}\Big) \right).
$$Then,
$$\dist_W(\mathcal{L}(S_n),\mu) \le C_{\alpha,d} n^{\frac{\alpha -2}{\alpha}}.
$$

\end{corollary}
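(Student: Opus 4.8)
The plan is to read the Corollary off Theorem~\ref{thm_TCL:cnz} by applying it to the triangular array
$$
\zeta_{n,i}\;:=\;\left(\frac{\alpha}{\dist_\alpha}\right)^{1/\alpha}\frac{1}{n^{1/\alpha}}\,\tau_i,\qquad 1\le i\le n,
$$
so that $S_n=\sum_{i=1}^n\bigl(\zeta_{n,i}-\esp{\zeta_{n,i}}\bigr)$ is exactly the object in the statement, and then estimating one by one the three terms on the right-hand side of Theorem~\ref{thm_TCL:cnz}. Write $\kappa:=(\alpha/\dist_\alpha)^{1/\alpha}$. The first ingredient is the moment input for the two ``moment sums'': since $\varepsilon_i$ is a unit vector, $|\tau_i|=\xi_i$ is a scalar Pareto variable of index $\alpha$, hence $\esp{\xi^q}<\infty$ for every $0\le q<\alpha$; in particular the two exponents $q=1$ and $q=2-\alpha$ occurring in the bound are admissible because $2-\alpha<1<\alpha$. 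By homogeneity, $\esp{|\zeta_{n,i}|^q}=\kappa^q\,n^{-q/\alpha}\,\esp{\xi^q}$ for those $q$.

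Substituting these identities into Theorem~\ref{thm_TCL:cnz} turns the two moment sums into explicit constants times powers of $n$ (there are $n$ summands, each of size $n^{-q/\alpha}$, weighed against the stated prefactor), and a routine count of exponents — the precise bookkeeping being the one performed in \cite[Section~5]{chen:nourdin:xu:yang:2019} — shows that both are $O\!\bigl(n^{(\alpha-2)/\alpha}\bigr)$, the first being of exactly that order and the second of strictly smaller order. The delicate term is the remainder $\esp{\sum_{i=1}^n|\mathcal R_{n,i}|}$. Here one exploits the explicit form of the Lévy measure
$$
\nu(d\theta)=a\,g(\theta)\,d\theta+b\sum_{i=1}^d\bigl(\sigma_i\delta_{e_i}+\sigma_i'\delta_{-e_i}\bigr)+c\,\nu_\gamma(d\theta),
$$
whose angular part is precisely the law of $\varepsilon_i$ and whose radial part is tuned, through the constant $\kappa$, to the Pareto density $\alpha\,r^{-\alpha-1}\mathbf 1_{\{r\ge 1\}}$; then $\mathcal R_{n,i}$ measures the discrepancy between the law of the rescaled increment $\zeta_{n,i}$ and the corresponding slice of $\nu$. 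This discrepancy is concentrated on the unit ball, where the Pareto density vanishes while the stable Lévy density does not, and after the change of variables $r\mapsto r/n^{1/\alpha}$ it reduces to an integral of size $O\!\bigl(n^{(\alpha-2)/\alpha}\bigr)$; this is exactly the estimate established in \cite[Section~5]{chen:nourdin:xu:yang:2019}, which I would quote.

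Collecting the three bounds and keeping the largest exponent then yields $\dist_W(\mathcal L(S_n),\mu)\le C_{\alpha,d}\,n^{(\alpha-2)/\alpha}$, which is the claim. The main obstacle is plainly the remainder term: once the Pareto moments are in hand the two moment sums are pure bookkeeping, whereas controlling $\esp{\sum_i|\mathcal R_{n,i}|}$ demands the careful density-versus-Lévy-measure comparison of \cite{chen:nourdin:xu:yang:2019}, and it is this term that pins down the exponent $(\alpha-2)/\alpha$.
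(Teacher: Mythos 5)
There is a genuine problem, and it is worth separating it from what the paper itself does. The paper does not prove this corollary at all: it is quoted verbatim, the text saying only that ``the following upper bound can be found in \cite{chen:nourdin:xu:yang:2019}, Section 5''. So the part of your proposal that defers the remainder term $\esp{\sum_i|\mathcal R_{n,i}|}$ (and, as you put it, ``the precise bookkeeping'') to that reference is in effect the same citation-based route the paper takes, and is unobjectionable as such. The gap is in the one step you actually carry out yourself, namely the exponent count for the two moment sums in Theorem~\ref{thm_TCL:cnz} with $\zeta_{n,i}=(\alpha/\rho^1_\alpha)^{1/\alpha}n^{-1/\alpha}\tau_i$. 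Since $|\tau_i|=\xi_i$ is Pareto of index $\alpha$, one has $\esp{|\zeta_{n,i}|^{2-\alpha}}=C\,n^{-(2-\alpha)/\alpha}$, so the first sum is $n^{-1/\alpha}\sum_{i=1}^n\esp{|\zeta_{n,i}|^{2-\alpha}}=C\,n^{1-\frac{1}{\alpha}-\frac{2-\alpha}{\alpha}}=C\,n^{2-\frac{3}{\alpha}}$, which is \emph{not} $O(n^{(\alpha-2)/\alpha})$: it exceeds the claimed rate by a factor $n^{1-1/\alpha}$, and for $\alpha\ge 3/2$ it does not even tend to zero. (Your treatment of the second sum is fine: $n^{-1/\alpha}\sum_i\esp{|\zeta_{n,i}|}^2=C\,n^{1-3/\alpha}\le C\,n^{1-2/\alpha}$.) So the assertion that the first term ``is of exactly that order'' is false if one plugs into the theorem as transcribed in the paper.

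The explanation is that the transcription of Theorem~\ref{thm_TCL:cnz} in this paper is schematic (the $\zeta_{n,i}$, the components of the L\'evy measure and the remainders $\mathcal R_{n,i}$ are not pinned down), and the bound in the original reference does not reduce the Pareto case to untruncated $(2-\alpha)$-moments in the way your count assumes; the rate $n^{(\alpha-2)/\alpha}$ comes out of the specific computation done in Section~5 of \cite{chen:nourdin:xu:yang:2019}, not of the crude substitution above. As written, then, your derivation either fails (if taken literally against the stated theorem) or collapses entirely into the citation, in which case nothing is proved beyond what the paper already quotes. To make the argument self-contained you would have to work from the exact statement in \cite{chen:nourdin:xu:yang:2019}, with its actual normalizations and truncated moment functionals, and redo the count there; your heuristic for $\mathcal R_{n,i}$ (the discrepancy living below the Pareto cutoff, of size $n^{1-2/\alpha}$ after rescaling) is the right picture, but it is precisely the content of the cited section rather than something your proposal establishes.
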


Recall we need a convergence rate for each $j=1, \dots, 2^m$:
$$\frac{1}{2^{\frac{n-m}{\alpha}}} \sum_{i=j2^{n-m}}^{(j+1)2^{n-m}}  \frac{1}{2^{m/\alpha}}Y_i \Rightarrow S\left(\frac{j+1}{2^m}\right) - S\left(\frac{j}{2^m} \right).
$$
Since each of the $2^m$ component are independent, the vector:
$$\left(S\left(\frac{1}{2^m}\right) , S\left(\frac{2}{2^m}\right) - S\left(\frac{1}{2^m} \right)  , \dots,  S\left(1\right) - S\left(\frac{2^m-1}{2^m} \right) \right) \overset{(d)}{=} \textbf{S},
$$has a stable distribution, whose L\'evy measure is
$$
\sum_{j=1}^{2^m} \frac12 \left( \delta_{e_j} + \delta_{-e_j} \right),
$$

Now, to use Theorem \ref{thm_TCL:cnz}, we need to compute each term in the right hand side, and $\zeta_{n,i}$ in terms of our to our $Y_i$.
Fortunately, in \cite{chen:nourdin:xu:yang:2019}, Paragraph 5.2, Example 2 (refer to the arXiv version 1), is a section devoted to the calculation we need. Using the same notations as in \cite{chen:nourdin:xu:yang:2019}, for a distribution function of the form 
$$
F_\xi(dr d\theta) = \frac{A}{r^{\alpha+1}} dr \nu(d\theta) + \frac{B(r\theta)}{r^{\beta+1}} dr d\theta,
$$
the convergence rate towards $\pi$, the multidimensional stable distribution, is:
$$
\rho_W^1({\cal L}(S_n), \pi) \le C_{\alpha,d,A,B} \Big( n^{\frac{\alpha-2}{\alpha}} +n^{\frac{\alpha-\beta}{\alpha}} \Big),
$$
where $d$ is the dimension, that is in our case, $2^m$. 
Recall that in our setting, we consider $Y_i$ in the normal domain of attraction (see Equation \eqref{normal:domain:attraction}), we now actually have to prescribe a decay rate for $\varepsilon$. 
From now on, assume that 
$$
\varepsilon(x) \le \frac{K_\varepsilon}{|x|^\gamma}.
$$
Note that this is the decay rate prescribed in \cite{chen:nourdin:xu:2018}, this relate to the Example discussed in \cite{chen:nourdin:xu:yang:2019} by taking $\gamma = \beta - \alpha \ge 0$.
Hence, in our case, the convergence rate is
$$
\rho_W^1({\cal L}(S_n), \pi) \le C_{\alpha,d,A,B} \Big( 2^{\frac{\alpha-2}{\alpha}n} +2^{\frac{-\gamma}{\alpha}n} \Big),
$$
where $\gamma$ relate to the choice of $\varepsilon$.
At this point, we need to be very careful with the constant $C_{\alpha,d,A,B}$, as a
compromise between $n$ and $m$ is to be chosen. In \cite{chen:nourdin:xu:yang:2019}, this
constant comes from the density estimation for the stable distribution. Since we
deal with symmetric stable distribution (that are unimodal with a mode at zero),
it is enough to upper bound the density at $x=0$ to get a good estimate on that
constant. Let us denote $\textbf{q}(\textbf{x})d\textbf{x}$ the density of
$\textbf{S}$, we have:
\begin{multline*}
\textbf{q}(0) = \frac{1}{(2\pi)^{2^m}}\int_{\R^{2^m}} \esp{ e^{ \langle\xi , \textbf{S} \rangle}} \dif\xi\\
                =\frac{1}{(2\pi)^{2^m}}\int_{\R^{2^m}} \exp \left(
                    \left(e^{i\langle \xi, \theta} -1 - i \langle \xi, \theta \rangle \textbf{1}_{\{|\theta|\le 1 \}}\right) \right.\\
                 \left. \times\Big(a g(\theta)\dif\theta+b \sum_{i=1}^d (\sigma_i \delta_{e_i} + \sigma_i'\delta_{-e_i} ) + c \nu_\gamma(\dif\theta)\Big)
                   \right) \dif \xi.
\end{multline*}
In general, one would proceed by change of variable looking for Gamma functions
to compute this integral. We point out that if the L\'evy measure had had a
density with respect to Lebesgue measure, the volume of the sphere $S^{2^m-1}$
would have come into consideration. However, because stable distribution
$\textbf{S}$ we consider has independent increments, this constant becomes
linear in the dimension $2^m$:

We therefore obtain the rate in Wasserstein distance:
$$\dist_{\R^{d}}\left( \frac{1}{2^{\frac{n-m}{\alpha}}} \sum_{i=j2^{n-m}}^{(j+1)2^{n-m}}  \frac{1}{2^{m/\alpha}}Y_i , j=1\dots,2^m, \textbf{S}\right) \le C_{\alpha,A,\varepsilon}2^m \Big( 2^{n(1-\frac{2}{\alpha})} + 2^{-n\frac{\gamma}{\alpha}}\Big).
$$

\section{ Approximation and Final Estimate}
\label{section:ctrl:norme:holder}
In what follows, the constants may change from line to line.
\subsection{Approximation Lemma}

Let $\tau_m=(t_k^m)$ where $t^n_k=k2^{-m},$ $k=0,...,2^m$ be the dyadic partition of $[0,1].$
Let $F$ be a stochastic process and $\pi^m(F)$ be its linear interpolation along $\tau^m,$ that is
\begin{align*}
    \pi_m(F)_t = F_{t^m_k}+ [t-t_m^k]2^m[F_{t^m_{k+1}} -F_{t_k^m}],~~t \in [t_k^m,t_{k+1}^m],~~k=0,...,2^m -1.
\end{align*}

\begin{lemma}\label{ctrl:norme:holder} Let  $1<p<\alpha <2,$ let $F$ be a
  stochastic process such that there exists $C>0$ such that
\begin{equation}\label{ctrl:holder:process}
  \esp{|F_t -F_s|^p}\leq C\, |t-s|^{p/\alpha},\ \forall \,s,t \in [0,1]^2.
\end{equation}
Then for $\eta <1/\alpha$, there exists a constant $C>0$ (depending on
$\alpha,p,\eta$) such
that, for all $m\ge 1$:
\begin{align*}
I^m&= \int \int _{[0,1]^2} \frac{\esp{|\pi_m(F)_t -\pi_m(F)_s -F_t +F_s|^p}}{|t-s|^{1+p\eta}}\dif s\dif t \leq C\ 2^{-m(1/\alpha -\eta)p}.\\
J^m&= \int_0^1 \esp{|\pi_m(F)_t-F_t|^p}\dif t \leq C\  2^{-m\frac{p}{\alpha}}.
\end{align*}
\end{lemma}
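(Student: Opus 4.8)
The bound on $J^m$ is the easy part, which I would dispatch first. For $t\in[t_k^m,t_{k+1}^m]$, writing $\lambda=2^m(t-t_k^m)\in[0,1]$ we have $\pi_m(F)_t-F_t=(1-\lambda)(F_{t_k^m}-F_t)+\lambda(F_{t_{k+1}^m}-F_t)$, so $|\pi_m(F)_t-F_t|\le\max(|F_{t_k^m}-F_t|,|F_{t_{k+1}^m}-F_t|)$ and hence $|\pi_m(F)_t-F_t|^p\le|F_{t_k^m}-F_t|^p+|F_{t_{k+1}^m}-F_t|^p$. Taking expectations, using \eqref{ctrl:holder:process} and $|t-t_k^m|\vee|t-t_{k+1}^m|\le 2^{-m}$, gives $\esp{|\pi_m(F)_t-F_t|^p}\le 2C\,2^{-mp/\alpha}$ uniformly in $t$; integrating over $t\in[0,1]$ yields $J^m\le 2C\,2^{-mp/\alpha}$.

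For $I^m$, write $G_t=\pi_m(F)_t-F_t$; the key step is the two-sided estimate
$$\esp{|G_t-G_s|^p}\le C\,\min\!\big(|t-s|^{p/\alpha},\,2^{-mp/\alpha}\big),\qquad s,t\in[0,1].$$
The bound by $2^{-mp/\alpha}$ is immediate from the uniform estimate on $\esp{|G_u|^p}$ just obtained and $|a-b|^p\le 2^{p-1}(|a|^p+|b|^p)$. For the bound by $|t-s|^{p/\alpha}$ I would argue by cases on the positions of $s<t$ relative to the partition $\tau_m$. If $s,t$ lie in the same cell $[t_k^m,t_{k+1}^m]$, then on that cell $\pi_m(F)$ is affine, so $G_t-G_s=2^m(t-s)(F_{t_{k+1}^m}-F_{t_k^m})-(F_t-F_s)$; the last term has $p$-th moment $\le C|t-s|^{p/\alpha}$ by \eqref{ctrl:holder:process}, while $2^{mp}|t-s|^p\esp{|F_{t_{k+1}^m}-F_{t_k^m}|^p}\le C\,(2^m|t-s|)^{p(1-1/\alpha)}|t-s|^{p/\alpha}\le C\,|t-s|^{p/\alpha}$, using $2^m|t-s|\le 1$ and $\alpha>1$. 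If $s,t$ lie in adjacent cells with common endpoint $t_k^m$, I use the crucial identity $G_{t_k^m}=0$ (the interpolant coincides with $F$ on $\tau_m$), so $|G_t-G_s|\le|G_t-G_{t_k^m}|+|G_{t_k^m}-G_s|$, and each term is controlled by the previous (same-cell) estimate applied to the pairs $(t,t_k^m)$ and $(t_k^m,s)$, both of which lie in a common cell and have separation at most $|t-s|$. Finally, if $s,t$ lie in non-adjacent cells then automatically $|t-s|\ge 2^{-m}$, and the bound follows from the pointwise estimate $\esp{|G_t-G_s|^p}\le C\,2^{-mp/\alpha}\le C\,|t-s|^{p/\alpha}$.

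Granting the two-sided estimate, the conclusion reduces to a one-dimensional integral: since $\int_{[0,1]^2}g(|t-s|)\,ds\,dt\le 2\int_0^1 g(u)\,du$ for $g\ge 0$,
$$I^m\le 2C\int_0^1\frac{\min(u^{p/\alpha},2^{-mp/\alpha})}{u^{1+p\eta}}\,du
= 2C\Big(\int_0^{2^{-m}}\!\!u^{p(1/\alpha-\eta)-1}\,du+2^{-mp/\alpha}\!\int_{2^{-m}}^{1}\!\!u^{-1-p\eta}\,du\Big).$$
Here $0<\eta<1/\alpha$ makes the first integrand integrable at $0$ (exponent $p(1/\alpha-\eta)-1>-1$), and $p\eta>0$ makes the second integral $\le (p\eta)^{-1}2^{mp\eta}$; both pieces are therefore bounded by a constant times $2^{-mp(1/\alpha-\eta)}$, which is the claimed bound.

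The only genuinely non-mechanical points are the realisation that one needs \emph{both} sides of the estimate on $\esp{|G_t-G_s|^p}$ — the pointwise bound $2^{-mp/\alpha}$ alone integrates against $|t-s|^{-1-p\eta}$ to an $m$-independent constant, while the H\"older bound $|t-s|^{p/\alpha}$ alone produces no decay in $m$ — together with the use of $G_{t_k^m}=0$ to transfer the same-cell estimate across a cell boundary; everything else is bookkeeping with \eqref{ctrl:holder:process} and the elementary inequality $2^m|t-s|\le 1$ inside a cell.
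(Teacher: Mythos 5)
Your proof is correct, and it takes a genuinely different (and tidier) route than the paper's. The paper expands $\pi_m(F)_t-\pi_m(F)_s-F_t+F_s$ explicitly, separates the diagonal blocks ($s,t$ in the same dyadic cell) from the off-diagonal ones, and then bounds six double integrals $I_{1,1}^m,I_{1,2}^m,I_{2,1}^m,\dots,I_{2,4}^m$ one by one, each time integrating the kernel $|t-s|^{-1-p\eta}$ over the relevant region. You instead isolate a single intermediate estimate, $\esp{|G_t-G_s|^p}\le C\,\min\bigl(|t-s|^{p/\alpha},\,2^{-mp/\alpha}\bigr)$ for $G=\pi_m(F)-F$, built from the same ingredients the paper uses implicitly (the interpolant agrees with $F$ on the grid, hypothesis \eqref{ctrl:holder:process}, and $2^m|t-s|\le 1$ inside a cell), with the splitting at the common grid point for adjacent cells playing the role of the paper's four-way expansion; you then reduce $I^m$ to the one-dimensional integral $\int_0^1 \min\bigl(u^{p/\alpha},2^{-mp/\alpha}\bigr)u^{-1-p\eta}\,\dif u$ cut at $u=2^{-m}$. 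This buys a shorter argument with a reusable pointwise bound, at no loss in the rate $2^{-m(1/\alpha-\eta)p}$; the paper's term-by-term computation yields essentially the same constants and nothing more. Your treatment of $J^m$ (convexity of the interpolation plus the moment hypothesis, giving a uniform-in-$t$ bound) is also cleaner than the paper's one-line appeal to a supremum estimate. One small remark: like the paper's proof, whose constants contain factors $1/(p\eta)$, your argument genuinely needs $\eta>0$ — for $\eta\le 0$ the tail integral $\int_{2^{-m}}^1 u^{-1-p\eta}\,\dif u$ no longer produces the stated rate — which is consistent with the range $\eta\in(0,1/\alpha]$ used in the main theorem even though the lemma as stated only requires $\eta<1/\alpha$.
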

\begin{remark}
   The stable process satisfies the assumption of Lemma
   \ref{ctrl:holder:process}, since for any $0\le s<t\le 1$, we have
   \begin{equation*}
S(t)-S(s)\stackrel{\text{d}}{=}(t-s)^{1/\alpha}S(1)
\end{equation*}
and $S(1)\in L^{p}$ for any $p<\alpha$.
\end{remark}

\begin{remark}
 Since $p/\alpha<1$, \eqref{ctrl:holder:process} does not entail that $F$ has
continuous sample-paths.
Moreover, note that we have $I^m+J^m = \esp{\Vert F-\pi_m(F)\Vert^p_{W_{\eta,p}}}$. Hence,  this Lemma
gives a control of the norm, in $W_{\eta,p}$, of the difference
between a process and its affine interpolation provided that we have a bound on
the moments of its increments.
\end{remark}

\begin{proof}
Without loss of generality, let $s <t$.
First, let us assume that $s$ and $t$ are elements of $\tau_m$, that is $s=k2^{-m}, t=l2^{-m}$.  Then $F_t-F_s= \pi_m(F)_t-\pi_m(F)_s$, in other words,  the projection agrees with the process on the points on the partition.

Let $T^m_k=[t_k^m,t^m_{k+1}],$
for $k=0,...,2^m-1.$
Let $i,j \in \{0,...,2^m \}$  $i \leq j$ then for $s\in T_j^m$ and $t \in T_i^m $ we have
\begin{multline*}
  \pi_m(F)_t-\pi_m(F)_s -F_t+F_s\\
  =\begin{cases}
& (t-s)2^m[F_{t_{i+1}^m} -F_{t_i^m}] -[ F_t-F_s],~~\mbox{if}~~i=j\\
    & [t-t_j^m] 2^m[F_{t_{j+1}^m} - F_{t_j^m}]- [t_{i+1}^m -s]2^m[F_{t_{i+1}^m} -F_{t_i^m}] - [F_t-F_{t_i^m}]+ [F_{t_{i+1}^m}-F_s]~~\mbox{otherwise.}
\end{cases}
 \end{multline*}
Then,
\begin{multline*}
  \left|\pi_m(F)_t-\pi_m(F)_s -F_t+F_s\right|^p\le 4^{p-1}\times\\
   \begin{cases}
    &  \left[|t-s|^p2^{mp}|F_{t_{i+1}^m} -F_{t_i^m}|^p +| F_t-F_s|^p\right],~~\mbox{if}~~i=j\\
    &  \Big[[t-t_j^m]^p2^{mp} |F_{t_{j+1}^m} - F_{t_j^m}|^p+ |t_{i+1}^m -s|^p 2^{mp}|F_{t_{i+1}^m} -F_{t_i^m}|^p
    + |F_t-F_{t_i^m}|^p+ |F_{t_{i+1}^m}-F_s|^p \Big]~~\mbox{otw.}
  \end{cases}
  \end{multline*}
 Using Fubini to exchange the integration order, we have:
 \begin{align*}
     I^m&= 2 \int_0^1 \int_0^t \frac{\esp{|\pi_m(F)_t -\pi_m(F)_s -F_t +F_s|^p}}{|t-s|^{1+p\eta}}\dif s\dif t
     = I_1^m + I_2^m
     \end{align*}
     where
     \begin{align*}
     I_1^m&= \sum_{i=0}^{2^m-1} \int_{T_i^m} \int_{T_i^m, s<t}\frac{\esp{|\pi_m(F)_t -\pi_m(F)_s -F_t +F_s|^p}}{|t-s|^{1+p\eta}}\dif s\dif t,\\
     I^m_2&= 2\sum_{j=1}^{2^m-1}\sum_{i=0}^{j-1}  \int_{T_j^m} \int_{T_i^m}\frac{\esp{|\pi_m(F)_t -\pi_m(F)_s -F_t +F_s|^p}}{|t-s|^{1+p\eta}}\dif s\dif t.
     \end{align*}
In the integral $I_1^m$, $t$ and $s$ are in the same interval $T_k^m$, thus are at a distance at most $2^{-m}$, where as in $I_2^m$, $s$ and $t$ are not in the same interval $T_k^m.$ 
Note crucially that for $I^m_2$, the denominator in $|s-t|$ is not singular.

For $I_1^m$, since $t$ and $s$ are in the same $T_i^m$, we can split the expectation as:
\begin{eqnarray*}
\esp{ \Big|\pi_m(F)_t-\pi_m(F)_s -F_t+F_s \Big|^p} &\leq& 4^{p-1} \esp{|t-s|^p2^{mp}|F_{t_{i+1}^m} -F_{t_i^m}|^p +| F_t-F_s|^p}.
\end{eqnarray*}
Using the fact that $ {\mathbb E}[|F_t -F_s|^p]\leq |t-s|^{p/\alpha}$, we can bound $I_1^m \leq 4^{p-1}[I_{1,1}^m + I_{1,2}^m]$, where:
     \begin{align*}
         I_{1,1}^m&=\sum_{i=0}^{2^m-1}\int \int_{T_i^m \times T_i^m, s<t}\frac{|t-s|^p 2^{-m[\frac{p}{\alpha} -p]}}{|t-s|^{1+ p\eta}}\dif s\dif t=2\sum_{i=0}^{2^m-1}2^{-m[\frac{p}{\alpha} -p]}\int_{T_i^m} \int_{t_i^m}^t (t-s)^{p-p\eta-1}\dif s\dif t\\
         I_{1,2}^m&= \sum_{i=0}^{2^m-1}\int \int_{T_i^m \times T_i^m, s<t}\frac{|t-s|^\frac{p}{\alpha}}{|t-s|^{1+ p\eta}}\dif s\dif t= 2\sum_{i=0}^{2^m-1} \int_{T_i^m }\int_{t_i^m}^t|t-s|^{\frac{p}{\alpha}-p\eta-1}\dif s\dif t.
     \end{align*}
%     Then,
%     \begin{align*}
%         I_{1,1}^n&=2\sum_{i=0}^{2^n-1}2^{-n[\frac{p}{\alpha} -p]}\int_{T_i^n} \int_{t_i^n}^t (t-s)^{p-1-p\eta}ds\dif t,\\
%         I_{1,2}^n&= 2\sum_{i=0}^{2^n-1} \int_{T_i^n }\int_{t_i^n}^t|t-s|^{\frac{p}{\alpha} -1-p\eta}ds\dif t
%     \end{align*}
     Integrating in $s$, we get:
        \begin{align*}
         I_{1,1}^m&=2\sum_{i=0}^{2^m-1}2^{-m(\frac{p}{\alpha} -p)}\frac{1}{p(1-\eta)}\int_{T_i^m}  (t-t_i^m)^{p-p\eta}\dif t,\\
         I_{1,2}^m&= 2\sum_{i=0}^{2^m-1} \frac{1}{p( 1/\alpha-\eta )}\int_{T_i^m }|t-t_i^m|^{\frac{p}{\alpha} -p\eta}\dif t.
     \end{align*}
     Recall that  $T_i^m=[i2^{-m}, (i+1)2^{-m}]$, we get:
     \begin{align*}
         I_{1,1}^m&=2\sum_{i=0}^{2^m-1}2^{-m(\frac{p}{\alpha} -p)}\frac{1}{p(1-\eta)(p-p\eta+1)}2^{-m(p-p\eta+1)},\\
         I_{1,2}^m&= 2\sum_{i=0}^{2^m-1} \frac{1}{p(1/\alpha-\eta)(1-p\eta+p/\alpha)}2^{-m(\frac{p}{\alpha} -p\eta +1)}
     \end{align*}
     and
       \begin{align*}
         I_{1,1}^m&=2^{-m[\frac{p}{\alpha} -p]}\frac{2}{p(1-\eta)(p-p\eta+1)}2^{-m(p-p\eta)},\\
         I_{1,2}^m&= \frac{2}{p[-\eta + 1/\alpha](1-p\eta+p/\alpha)}2^{-m[\frac{p}{\alpha} -p\eta ]}.
     \end{align*}
Note that the crucial part is that we assumed $\alpha>1$ and $\eta< 1/\alpha$,
thus, the exponent is indeed negative. Now, we turn to the study of $I_2^m$. In
this case, $s$ and $t$ are not in the same interval $T_k^m,$  we can split the
expectation four ways:
\begin{align*}
\esp{\Big|\pi_m(F)_t-\pi_m(F)_s -F_t+F_s\Big|^p } \leq & 4^{p-1} \Big[[t-t_j^m]^p2^{mp} \esp{|F_{t_{j+1}^m} - F_{t_j^m}|^p} + \esp{|F_t-F_{t_i^m}|^p} \\
    &+ \esp{|F_{t_{i+1}^m}-F_s|^p} + |t_{i+1}^m -s|^p 2^{mp} \esp{|F_{t_{i+1}^m} -F_{t_i^m}|^p }.
\end{align*}

This prompts us to split $I_2^m$ four ways according to the above decomposition.
     \begin{align*}
         I_2^m\le 2\sum_{l=1}^4 I_{2,l}^m
     \end{align*}
For the first contribution above, and since $p>1$, we can use the
Lemma~\ref{ctrl:holder:process}. We get
     \begin{align*}
         I_{2,1}^m&=   2 \sum_{j=1}^{2^m-1}\sum_{i=0}^{j-1}\int \int _{T_j^m \times T_i^m} \frac{[t-t_j^m]^p\,2^{mp}\, \esp{|F_{t_{j+1}^m} - F_{t_j^m}|^p}}{|t-s|^{1+p\eta}}\dif s \dif t\\
         &\le 2 \sum_{j=1}^{2^m-1}\sum_{i=0}^{j-1}\int \int _{T_j^m \times  T_i^n} 2^{-m[\frac{p}{\alpha}- p]}\frac{|t-t_j^m|^p}{|t-s|^{1+p\eta}} \dif s\dif t.
     \end{align*}
We can now simply compute the remaining integrals:
     \begin{align*}
         I_{2,1}^m&\le2 \sum_{j=1}^{2^m-1}\int \int _{T_j^m \times [0,t_j^m]} 2^{-m[\frac{p}{\alpha}- p]}\frac{|t-t_j^m|^p}{|t-s|^{1+p\eta}} \dif s\dif t\\
         &\leq 2\frac{2^{-m[\frac{p}{\alpha}- p]}}{p\eta} \sum_{j=1}^{2^m-1}\int_{T_j^m} |t-t_{j}^m|^{p-p\eta}\dif t\\
         &\leq 2\frac{ 2^{-m[\frac{p}{\alpha}-p+p-p\eta+1-1]}}{p\eta(p-p\eta +1)} \leq \frac{2^{1-m[\frac{p}{\alpha} -p\eta]}}{p\eta(p-p\eta +1)} \cdotp
     \end{align*}
Again, we note that since $p \geq 1$ and $1/\alpha>\eta$, this exponent is indeed negative. 
For the second contribution, we proceed similarly, using \eqref{ctrl:holder:process} to estimate:
     \begin{align*}
         I_{2,2}^m &=   2 \sum_{j=1}^{2^m-1}\sum_{i=0}^{j-1}\int \int _{T_j^m \times T_i^m} \frac{\esp{|F_t-F_{t_i^m}|^p}}{|t-s|^{1+p\eta}}\dif s \dif t\\
         &\le 2 \sum_{j=1}^{2^m-1}\sum_{i=0}^{j-1}\int \int _{T_j^m \times T_i^m} \frac{|t-t_j^m|^{\frac{p}{\alpha}}}{|t-s|^{1+p\eta}} \dif s\dif t.
    \end{align*}
Here the sum of integrals in $d$ on $T_i^m$  yields an integral on $s$ between $0$ and $t_j^m.$  We can therefore estimate 
    \begin{align*}
        I_{2,2}^m &\leq \frac{2}{p\eta} \sum_{j=1}^{2^m-1}\int_{T_j^m} |t- t_j^m|^{\frac{p}{\alpha} -p\eta}\dif t \leq \frac{ 2^{1-m(\frac{p}{\alpha}-\eta)}}{p\eta( 1-p\eta +p/\alpha)} \cdotp
         \end{align*}
Now, for the terms involving $s \in T_i^m$, we use \eqref{ctrl:holder:process} to estimate:
        \begin{align*}
             I_{2,3}^m &= 2 \sum_{j=1}^{2^m-1}\sum_{i=0}^{j-1}\int \int _{T_j^m \times T_i^m} \frac{\esp{|F_{t_{i+1}^m}-F_s|^p}}{|t-s|^{1+p\eta}}\dif s \dif t\\
             &\le 2  \sum_{j=1}^{2^m-1}\sum_{i=0}^{j-1}\int \int _{T_j^m \times T_i^m} \frac{|s-t_{i+1}^m|^{\frac{p}{\alpha}}}{|t-s|^{1+p\eta}} \dif s\dif t.
             \end{align*}
Which gives after integration: 
        \begin{align*}
             I_{2,3}^m& \le  2  \sum_{i=0}^{2^m-2}\sum_{j=i+1}^{2^m-1 }\int \int_{T_j^m \times T_i^m} \frac{|s-t_{i+1}^m|^{\frac{p}{\alpha}}}{|t-s|^{1+p\eta}} \dif s\dif t\\
             &= 2  \sum_{i=0}^{2^m-2}\int_{T_i^m} \int_{t_{i+1}^m}^1 \frac{|s-t_{i+1}^m|^{\frac{p}{\alpha}}}{|t-s|^{1+p\eta}} \dif s\dif t\\
             &= \frac{2}{p\eta} \sum_{i=0}^{2^m-2}\int_{T_i^m}  |s-t_{i+1}^m|^{\frac{p}{\alpha}-p\eta}\dif s\leq \frac{2^{1-m[\frac{p}{\alpha} -p\eta ]}}{p\eta(1-p\eta +p/\alpha)}\cdotp
         \end{align*}
    The last term handled similarly:
    \begin{align*}
        I_{2,4}^m &= 2 \sum_{j=1}^{2^m-1}\sum_{i=0}^{j-1}\int \int _{T_j^m \times T_i^m} \frac{|t_{i+1}^m -s|^p 2^{mp} \esp{|F_{t_{i+1}^m} -F_{t_i^m}|^p}}{|t-s|^{1+p\eta}}\dif s\dif t\\
        &=   2^{1-m[\frac{p}{\alpha} -p]}\sum_{j=1}^{2^m-1}\sum_{i=0}^{j-1}\int \int_{T_j^m \times T_i^m} \frac{|s-t_{i+1}^m|^{p}}{|t-s|^{1+p\eta}} \dif s\dif t\\
        &=   2^{1-m[\frac{p}{\alpha} -p]}\sum_{i=0}^{2^m-1}\sum_{j=i+1}^{2^m-1}\int \int_{T_j^m \times T_i^m} \frac{|s-t_{i+1}^m|^{p}}{|t-s|^{1+p\eta}} \dif s\dif t\\
        &=   2^{1-m[\frac{p}{\alpha} -p]}\sum_{i=0}^{2^m-1}
        \int \int_{[t_{I+}^m,1]\times T_i^m} \frac{|s-t_{i+1}^m|^{p}}{|t-s|^{1+p\eta}} \dif s\dif t\\
         &\leq    2^{-m[\frac{p}{\alpha} -p]}\sum_{i=0}^{2^m-1}
         \int_{[t_{I+}^m,1]} \frac{|s-t_{i+1}^m|^{p-p\eta}}{p\eta }\dif s\\
%         &= \frac{2}{p\eta(p-p\eta +1)}2^{-m[\frac{p}{\alpha} -p +p -p\eta +1-1]}\\
         &= \frac{2^{1-m[\frac{p}{\alpha} -p\eta]}}{p\eta(p-p\eta +1)}\cdotp
         \end{align*}
         Then, there exists a constant $C$ depending on $p, \eta, \alpha$, such that
         \begin{align*}
             | I_{k,l}|^m \leq C\ 2^{-m[\frac{p}{\alpha}-1]},~~\mbox{for}~~(k,l)\in \{(1,2,(1,1)\}\cup\{(2,i),~~i=1,...,4\},~~\forall m
         \end{align*}

         The case of $J_n$ is simpler since
         \begin{align*}
           {\mathbb E}[ \sup_t |\pi_m(F)_t-F_t|^p]\leq C\  2^{-m\frac{p}{\alpha}}.
         \end{align*}
The proof is thus complete.
\end{proof}

\begin{remark}
    Since $I_m+J_m =  \esp{\Vert F-\pi_m(F)\Vert^p_{W_{\eta,p}}}$, we get the estimate:
    $$
     \esp{\Vert F-\pi_m(F)\Vert^p_{W_{\eta,p}}} \le   C\   \Big( 2^{-m\frac{p}{\alpha}} +2^{-m(1/\alpha -\eta)p}  \Big) \le  C\ 2^{-m\left(\frac{1}{\alpha} -\eta\right)p}.
    $$
\end{remark}

\subsection{The Interpolated Process Fits the setting of Lemma \ref{ctrl:norme:holder} }

In this section, we show how the interpolated random walk satisfies the assumptions of Lemma~\ref{ctrl:norme:holder}.
% Note that it is obvious that the stable process satisfies the assumption of
% Lemma \ref{ctrl:norme:holder}.
% We focus on the interpolated random walk.

%\begin{align*}
%  X_n(t)
  % = \frac{\sqrt{2^n}}{2^{n/\alpha}} \tilde{X}_n(t)
%  &= \frac{\sqrt{2^n}}{2^{n/\alpha}} \sum_{i=1}^{2^n} Y_i h_n^i (t)\\
 % &= \frac{1}{2^{n/\alpha}} \sum_{i=1}^{\left \lfloor{2^nt}\right \rfloor -1} Y_i +  \frac{Y_{\left \lfloor{2^nt}\right \rfloor }}{2^{n/\alpha}} \Big( 2^nt - \left \lfloor{2^nt}\right \rfloor \Big).  % \sim
%\end{align*}

\begin{lemma}
Assume that for some $p <\alpha$,
\begin{equation}\label{moment:beta:fini}
 \sup_{n}\esp{ \left| \frac{Y_1+ \cdots + Y_n}{n^{1/\alpha}} \right|^p }<+\infty.
\end{equation}
Then, for some $C>0$, for all $n\ge 1$, we have:
$$
\esp{\Big| X_n(t) - X_n(s)\Big|^p} \le C\, |t-s|^{p/\alpha}.
$$
\end{lemma}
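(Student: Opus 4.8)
The plan is to transfer the moment bound \eqref{moment:beta:fini} from partial sums to the interpolated path by decomposing a generic increment $X_n(t)-X_n(s)$ into a ``bulk'' grid part, to which \eqref{moment:beta:fini} applies verbatim, plus at most two short ``interpolation fragments'', each involving a single variable $Y_i$. Set $M:=\sup_{n}\esp{\left|(Y_1+\cdots+Y_n)/n^{1/\alpha}\right|^p}$, which is finite by hypothesis; taking $n=1$ gives in particular $\esp{|Y_1|^p}\le M$. Recall from \eqref{eq_TCL_short_core:2} and its interpolated form that $X_n$ is affine on each dyadic interval $[t_k^n,t_{k+1}^n]$ with $t_k^n=k2^{-n}$, with total increment over that interval equal to $2^{-n/\alpha}Y_{i}$ for a single index $i=i(k)$, and that for two grid points $s=t_k^n<t=t_l^n$ one has $X_n(t)-X_n(s)\eqlaw 2^{-n/\alpha}(Y_1+\cdots+Y_{l-k})$ by stationarity and independence of the $Y_i$.

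Fix $0\le s<t\le 1$, let $u$ be the smallest grid point with $u\ge s$ and $v$ the largest with $v\le t$. If $u>v$, then $s,t$ lie in a common grid interval, so $X_n(t)-X_n(s)=2^n(t-s)\,2^{-n/\alpha}Y_{i}$ with $2^n(t-s)\le 1$; since $\alpha>1$ one has $(2^n(t-s))^p\le(2^n(t-s))^{p/\alpha}$, hence $\esp{|X_n(t)-X_n(s)|^p}\le\esp{|Y_1|^p}\,(t-s)^{p/\alpha}$. If $u\le v$, write
\begin{equation*}
X_n(t)-X_n(s)=\bigl(X_n(t)-X_n(v)\bigr)+\bigl(X_n(v)-X_n(u)\bigr)+\bigl(X_n(u)-X_n(s)\bigr).
\end{equation*}
The middle term equals $2^{-n/\alpha}$ times a sum of $2^n(v-u)$ consecutive $Y_i$'s with $v-u\le t-s$, so by \eqref{moment:beta:fini} its $p$-th moment is at most $M(v-u)^{p/\alpha}\le M(t-s)^{p/\alpha}$. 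Each outer term (which vanishes when $t$, resp. $s$, is already on the grid) has the form $2^nh\,2^{-n/\alpha}Y_i$ with $0\le h\le 2^{-n}$ and $h\le t-s$, so exactly as in the first case $(2^nh)^p\le(2^nh)^{p/\alpha}$ and its $p$-th moment is at most $\esp{|Y_1|^p}\,h^{p/\alpha}\le M(t-s)^{p/\alpha}$. Applying $|a+b+c|^p\le 3^{p-1}(|a|^p+|b|^p+|c|^p)$ and collecting the three estimates yields $\esp{|X_n(t)-X_n(s)|^p}\le 3^pM(t-s)^{p/\alpha}$, uniformly in $n$.

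Hence the statement holds with $C=3^pM$. The only genuine work is the endpoint case analysis, which is purely combinatorial; the mechanism making it go through is that an affine fragment over a sub-interval of length $h\le 2^{-n}$ contributes a factor $(2^nh)^p$ which, since $\alpha>1$, is dominated by the target Hölder factor $(2^nh)^{p/\alpha}$, while the bulk increment between grid points is handled directly by the assumed moment bound together with the i.i.d.\ structure.
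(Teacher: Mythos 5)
Your proof is correct and follows essentially the same route as the paper: treat grid points via the assumed moment bound on normalized partial sums, treat points within one dyadic sub-interval via the single-variable moment and the elementary inequality comparing exponents $p$ and $p/\alpha$, and reduce the general case to these two by inserting the neighbouring grid points $s^{+}$, $t^{-}$ and using a convexity-type inequality for the sum of three terms. If anything, your handling of the within-interval case (bounding $(2^{n}(t-s))^{p}$ by $(2^{n}(t-s))^{p/\alpha}$ since $2^{n}(t-s)\le 1$) is slightly cleaner than the paper's bookkeeping of the $2^{n}$ factors, but the argument is the same.
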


%Note that this Lemma shows that Lemma \ref{ctrl:norme:holder} is applicable.

\begin{proof}

We start with the case where $t$ and $s$ are on the grid: $t=k2^{-n}$ and $s=j2^{-n}$.
Without loss of generality, assume $t > s$.
In this case, the increment of the interpolated process writes:
$$
X_n(t) - X_n(s) = \frac{1}{2^{n/\alpha}} \sum_{i=j}^{k-1} Y_i.
$$
From \eqref{moment:beta:fini}, we deduce that for all $n\in \N$, there exists some constant $C>0$ such that :
$$
\esp{ \Big| Y_1 + \cdots + Y_n \Big|^p} \le C n^{p/\alpha}.
$$
Hence, we directly get:
\begin{eqnarray*}
\esp{ |X_n(t) - X_n(s)|^p } &=& \esp{ \left|\frac{1}{2^{n/\alpha}} \sum_{i=j}^{k-1} Y_i \right|^p}\\
%&=&\esp{ \left|\frac{1}{2^{n/\alpha}} \sum_{i=1}^{k-j} \tilde{Y}_i \right|^\beta}\\
&\le &C \frac{1}{2^{np/\alpha}} |k-j|^{p/\alpha}\\
&=& C\left| \frac{k}{2^{n}}-\frac{j}{2^{n}} \right|^ {p/\alpha} = C|t-s|^{p/\alpha},
\end{eqnarray*}
thus, the moment condition in lemma \ref{ctrl:norme:holder} is satisfied for points on the grid.

Now, let us discuss the case where $t$ and $s$ are not on the grid, but in the same sub-interval $[j2^{-n}, (j+1)2^{-n}]$.
In that case, the increment of the interpolated process reduces to 
$$
X_n(t) - X_n(s) = \frac{1}{2^{n/\alpha}} Y_j\sqrt{2^n} (t-s).
$$
In that case, we get:
$$
\esp{ |X_n(t) - X_n(s)|^p }  = \frac{\sqrt{2^{p n}} (t-s)^p}{2^{np/\alpha}} \ \esp{|Y_j|^p}.
$$
Since  $ \alpha\le 2$, we have $\frac{p}{2} \le \frac{p}{\alpha}$, thus $(2^n)^{p/2} \le (2^n)^{p/\alpha}$. Hence, we get:
$$
\esp{ |X_n(t) - X_n(s)|^p }  \le C (t-s)^p \le C (t-s)^{p/\alpha},
$$
the last inequality being true since $t$ and $s$ are in the same sub-interval, hence $t-s \le 1$.

It remains us to discuss the case where $t$ or $s$ are not on the grid, and in separate sub-intervals. In that case, we introduce $s^+$ and $t^-$, on the grid $\{k2^{-n}, k=1,\dots, 2^n\}$ such that $s \le s^+ \le t^- \le t$.
We can always split the difference into:
$$
X_n(t) - X_n(s) = \Big(X_n(t) - X_n(t^-)\Big) + \Big(X_n(t^-) - X_n(s^+)\Big) + \Big(X_n(s^+) - X_n(s) \Big).
$$

Now, $t$ and $t^-$ are in the same sub-interval, and so are $s$ and $s^+$. Moreover, $s^+$ and $t^-$ are on the grid, hence, we can directly use the controls explained above to get:
$$
\esp{ |X_n(t) - X_n(s)|^p }  \le C \Big(  (t-t^-)^{p/\alpha} +   (t^- - s^+)^{p/\alpha}  +  (s^+-s)^{p/\alpha} \Big) \le C_p (t-s)^{p/\alpha},
$$
where  for the last inequality, we used the fact that $a^p + b^p \le C_p (a+b)^p$, that holds with $C_p = 2^{1-p}$ if $p\le 1$ and $C_p= 1$ if $p \ge 1$.
Hence, Lemma \ref{ctrl:norme:holder} is applicable and the norm in $W_{\eta,p}$ is controlled.

\end{proof}

%%%% Plus besoin du cas en log?
%In the case 2, the proof is a little bit more involved since the normalization is different. 
%In that case, we can write:
%$$
%\left(\frac{(i-j)\log(i-j)}{2^n\log(2^n)} \right)^\frac{p}{\alpha} \le \left( \frac{i}{2^n} - \frac{j}{2^n}\right)^{\frac{p}{\alpha}} \times \left(\frac{\log(i-j)}{\log(2^n)}\right)^\frac{p}{\alpha}.
%$$
%In that case, the slow varying property of the log allows us to upper bound the remaining term.

\subsection{Derivation of the Final Rate of convergence}
\label{final:derivation}
In this section, we derive the final rate of convergence of the interpolated
random walk $(X_n(t))_{t\in [0,1], n \ge 0 }$ towards the stable process $S$ in
$W_{\eta,p}$. Recall:
          %           This is done combining the three rates we have obtained:
          %           \begin{eqnarray*}
          %             \dist_W\Big(\pi_m(X_n),\pi_m(S)\Big) &\le& C_{\alpha}2^{2m} 2^{n\frac{\alpha -2}{\alpha}},\\
          %             \E\left[\Vert  \pi_m(S)-S\Vert_{W_{\eta,p}}^p\right] &\le& \Vert S_1 \Vert_{L^p} 2^{-m/\alpha}\\
          %           \E\Big[\Vert X_n- \pi_m(X_n) \Vert_{W_{\eta,p}}\Big] &\le& c_p \Vert Y\Vert_{L^p} 2^{ -n( \gamma + \beta (\eta p - \varepsilon))}.
          %           \end{eqnarray*}
\begin{equation*}
  \dist_{W_{\eta,p}}(X_n,S)% =\sup_{F\in \Lip_{1}}\left(\int_{W_{\eta,p}} F\dif \mu-\int_{W_{\eta,p}} F\dif \nu\right)
  = \sup_{F\in \Lip_{1}}\Big(\esp{F(X_n)}- \esp{F(S)} \Big).
\end{equation*}
Let $F$ be a Lipschitz function from $W_{\eta,p}$ to $\R$. We have:
\begin{eqnarray*}
  \esp{F(X_n) -F(S)} &=&  \Big(\esp{F(X_n)} -\esp{F(\pi_m(X_n))} \Big) +  \Big(\esp{F(\pi_m(X_n))}-\esp{F(\pi_m(S))} \Big)\\
                           &&+  \Big(\esp{F(\pi_m(S))} -\esp{F(S)}\Big)\\
                           &\le& \esp{\Vert X_n - \pi_m(X_n)\Vert_{W_{\eta,p}}} + \dist_W\Big(\pi_m(X_n),\pi_m(S)\Big)+ \esp{\Vert S - \pi_m(S)\Vert_{W_{\eta,p}}}.
\end{eqnarray*}
Thus,
\begin{eqnarray*}
  \dist_{W_{\eta,p}}(X_n,S) &=& \sup_{F \in {\Lip}(W_{\eta,p}) }\esp{F(X_n)} -\esp{F(S)} \\
             & \le& \esp{\Vert X_n - \pi_m(X_n)\Vert_{W_{\eta,p}} }+ \dist_{W_{\eta,p}}(\pi_m(X_n),\pi_m(S))+ \esp{ \Vert S - \pi_m(S)\Vert_{W_{\eta,p}}}.
\end{eqnarray*}
We plug-in each the controls we obtained for each term, to get:
$$\dist_{W_{\eta,p}}(X_n,S)  \le   C\ \Big(2^{-m\left(\frac{1}{\alpha} -\eta\right)p} + 2^{m} 2^{n\frac{\alpha-2}{\alpha}} +2^m 2^{-n\frac{\gamma}{\alpha}} +2^{-m\left(\frac{1}{\alpha} -\eta\right)p} \Big)
$$
Crucially, we can already notice we can choose  $m$ and $n$ such that every exponent is actually negative.
Up to a modification of the constant, we can group the first and fourth term together, to get an estimate:
\begin{equation}\label{borne:wasserstein:avec:m}
\dist_{W_{\eta,p}}(X_n,S)\le C\ \Big(2^{-m\left(\frac{1}{\alpha} -\eta\right)p} + 2^{m} 2^{-n\left(\frac{2}{\alpha}-1\right)} +2^m 2^{-n\frac{\gamma}{\alpha}} \Big).
\end{equation}
We see that we need to choose $m$ in terms of $n$ to ensure that:
\begin{itemize}
    \item $2^{-m\left(\frac{1}{\alpha} -\eta\right)p}$ tends to 0 as fast as possible, that is $m$ as large as possible,
    \item $2^{m} 2^{-n\left(\frac{2}{\alpha}-1\right)}$ tends to 0 as fast as possible, that is $m \le n\left(\frac{2}{\alpha}-1\right)$.
    \item $2^m 2^{-n\frac{\gamma}{\alpha}} $ tends to 0 as fast as possible, that is $m \le n\frac{\gamma}{\alpha}$.
\end{itemize}
One can solve this optimization problem to find the optimal $m$ in terms of $n$, but the calculations are quite heavy. 
Hence, we provide a more tractable calculation giving a sub-optimal rate. First, note that 
$$
\dist_{W_{\eta,p}}(X_n,S)\le C\ \Big(2^{-m\left(\frac{1}{\alpha} -\eta\right)p} + 2^{m-n \times \min\left(\frac{2}{\alpha}-1, \frac{\gamma}{\alpha}\right)}  \Big)
$$
We set $m=\kappa n$, and find $\kappa$ such that the exponent match:
$$
-\kappa \left( \frac{1}{\alpha} - \eta \right)p = \kappa - \min\left( \frac{2}{\alpha} -1 , \frac{\gamma}{\alpha} \right) \Rightarrow \kappa = \frac{\min\left( \frac{2}{\alpha} -1 , \frac{\gamma}{\alpha} \right)}{1+ \left( \frac{1}{\alpha} - \eta \right)p}.
$$

%We choose $m=\gamma n$, where $0<\gamma < \frac{2}{\alpha}-1$, and find the optimal $\gamma$.
% Optimality is reached when the two exponent coincide.
% We solve:
% $$
% -m\left( \frac{1}{\alpha} - \eta\right) p = m - n\left(\frac{2}{\alpha}-1\right).
% $$
%We obtain
%$$
%\gamma=  \frac{\frac{2}{\alpha}-1}{1+\left(\frac{1}{\alpha}- \eta\right)p}\cdotp
%$$
% We can notice that this quantity is indeed negative, and that $\gamma  < \frac{2}{\alpha}-1$, as requested.
Our final bound comes out to be:
$$
\dist_W(X_n,S)\le C\  2^{-n \upsilon},
$$
with 
$$
\upsilon = 
\left(\frac{1}{\alpha}-\eta\right)p 
\frac{\min\left(\frac{2}{\alpha}-1, \frac{\gamma}{\alpha} \right)}{1+\left(\frac{1}{\alpha}- \eta\right)p}.
$$

  \section{Proof of the Moment Condition}
  \label{section:moment:cond:proof}
  In this section, we prove that the moment condition \ref{moment:beta:fini} actually holds  for all $p <\alpha$.
% $$
% \sup_{n}\ \E  \left| \frac{Y_1+ \cdots + Y_n}{n^{1/\alpha}} \right|^p <+\infty,
% $$
% when $Y_1, \dots Y_n$ are in the normal domain of attraction of a stable distribution.
Recall that a random variable $Y$ is  in the normal domain of attraction of a stable distribution if its distribution function $F_Y$ is of the form:
$$
1-F_Y(t) = \P(Y \ge t) = \frac{A+ \varepsilon(t)}{t^\alpha} \mbox{ and } F_Y(-t) = \frac{A+ \varepsilon(-t)}{(-t)^\alpha},
$$
whenever $|t| \ge 1$.  
We split this paragraph into two parts in order to isolate the main ideas.
We first deal with the symmetrized Pareto case in subsection \ref{sec:pareto:moment}, then deal with the full case in subsection~\ref{sec:normal:domain}.
In both case, the proof uses a technique very reminiscent of Stein's method. 
The key is  to perform an integration by parts with respect to the Pareto distribution.

\begin{comment}
\eqref{moment:beta:fini} holds in the following cases:
\begin{itemize}
\item When $Y$ is a (symmetrized) Pareto.
\item When the distribution function of $Y$ is of the form $1-\log(x)/x^\alpha$
\item When the variables $Y$ are discretely supported with $\Pr[Y=n]=\frac{1}{n^{1+\alpha}}$.
\end{itemize}

Note that as a corollary of Section \ref{sec:non-integrable-case}, the moment condition can be deduced when $F_Y(x) = 1- \log(x)/x^\alpha$ from the estimate:
$$
\esp{\psi\left(\sum_{j=1}^{n}X_{n,j}\right)}-\esp{\psi(S_{\alpha})}\le \frac{c}{\log n}\cdotp
$$
by considering $\psi_\varepsilon(x) \underset{\varepsilon \rightarrow 0}\longrightarrow x^\beta$.

Besides, once the moment condition has been established for the symmetrized Pareto, one can deduce the same control for the discrete case, via a series-integral comparison.

It remains us to prove the moment condition for symmetrized Pareto distribution. 
The main result of this section is the following. 
\end{comment}

\subsection{The case of Symmetrized Pareto}
\label{sec:pareto:moment}

\begin{theorem}\label{moment:ordre:p}
    Let $Y_1,\dots, Y_n$ independent and identically distributed with probability density function:
    $$\mathbb{P}(Y\in \dif y) =  \frac{\alpha}{2}\frac{\dif y}{|y|^{\alpha+1}}\ind{|y| >1} .$$
    Then, it holds that for all $p<\alpha$:
    $$
 \sup_{n}\    \mathbb{E}\left[\left| \frac{1}{n^{1/\alpha}}\sum_{i=1}^{n} Y_i\right|^p\right] <+\infty.
    $$
\end{theorem}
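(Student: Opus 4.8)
The plan is to handle two regimes separately depending on whether $p \le 1$ or $1 < p < \alpha$, since subadditivity of $x\mapsto x^p$ trivializes the first case. For $p \le 1$ I would write $\esp{|\sum_i Y_i|^p} \le \esp{(\sum_i |Y_i|)^p} \le \sum_i \esp{|Y_i|^p} = n\,\esp{|Y_1|^p}$, which is finite because $p<\alpha$, and dividing by $n^{p/\alpha}$ gives a bound $n^{1-p/\alpha}\esp{|Y_1|^p}$ — wait, this is \emph{not} bounded in $n$ when $p<\alpha$. So the naive subadditive bound is too weak and cancellation must be exploited even for small $p$; the symmetry of the distribution is essential. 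The real plan, therefore, is as follows. Write $S_n = \sum_{i=1}^n Y_i$ and set $\beta = p$. The idea hinted at by the paper ("integration by parts with respect to the Pareto distribution", "very reminiscent of Stein's method") is to find a Stein-type identity for the symmetrized Pareto law and use it to control $\esp{|S_n|^p}$ by a self-improving argument.

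Concretely, I would proceed by truncation plus a Stein/integration-by-parts step. First split each $Y_i = Y_i' + Y_i''$ where $Y_i'$ is truncated at level $n^{1/\alpha}$ (say $Y_i' = Y_i \ind{|Y_i| \le n^{1/\alpha}}$) and $Y_i''$ is the tail part. The tail part contributes negligibly: $\P(\exists i: |Y_i|>n^{1/\alpha}) \le n \cdot \P(|Y_1|>n^{1/\alpha}) = n\cdot n^{-1} = 1$ — again borderline, so I would instead truncate at $n^{1/\alpha}(\log n)$ or at $\delta n^{1/\alpha}$ and be more careful, using that $\esp{|Y''_1|^p\mathbf 1_{|Y_1|>M}} \sim M^{p-\alpha}$. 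The truncated variables have all moments; in particular $\mathrm{Var}(Y_1')\lesssim n^{(2-\alpha)/\alpha}$, so by symmetry and Rosenthal's / Burkholder's inequality (or just the Marcinkiewicz–Zygmund inequality, valid since $1<p<2$) one gets $\esp{|\sum_i Y_i'|^p} \lesssim \esp{(\sum_i (Y_i')^2)^{p/2}} \lesssim n^{p/2}\esp{|Y_1'|^2}^{p/2}$ when... no: Marcinkiewicz–Zygmund gives $\esp{|\sum Y_i'|^p}\lesssim \esp{(\sum (Y_i')^2)^{p/2}}$, and since $p<2$, $(\sum a_i)^{p/2}\le \sum a_i^{p/2}$, so this is $\le n\,\esp{|Y_1'|^p}$, still the bad bound. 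For $p>1$ one should instead keep the full power and use that $\esp{(\sum (Y_i')^2)^{p/2}} \le (\esp{\sum (Y_i')^2})^{p/2}$ when $p\le 2$ by Jensen (concavity of $x\mapsto x^{p/2}$), giving $\lesssim (n\,\esp{(Y_1')^2})^{p/2} \lesssim (n\cdot n^{(2-\alpha)/\alpha})^{p/2} = n^{p/\alpha}$. That is exactly the bound we want. So for $1<p<2$ the route is: symmetrize $\Rightarrow$ Marcinkiewicz–Zygmund $\Rightarrow$ Jensen with the truncated second moment, plus a separate easy estimate for the untruncated tail.

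The remaining case $p\le 1$ is the genuinely delicate one, because neither subadditivity nor a square-function bound of the above type is available (the exponent $p/2 < 1/2$ makes Jensen go the wrong way), and this is where I expect the main obstacle to lie. Here I would use the integration-by-parts/Stein identity directly: for the symmetrized Pareto there is an identity of the form $\esp{Y_1 f(Y_1)} = \text{(boundary terms at } |y|=1) + \esp{g(Y_1)f'(Y_1)}$ for suitable test functions, and applying it with $f$ related to $x\mapsto |x|^{p-1}\mathrm{sgn}(x)$ evaluated at $S_n$, conditionally on $S_{n-1}$, should yield a recursion $a_n := \esp{|S_n|^p}$ satisfying $a_n \le a_{n-1} + (\text{lower-order in } n)$ which, combined with the scaling $n^{-p/\alpha}$, closes. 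Alternatively — and this is probably cleaner — one notes that $\esp{|S_n|^p} = c_p\int_0^\infty t^{-1-p}(1-\mathrm{Re}\,\widehat{\mu_n}(t))\,dt$ where $\widehat{\mu_n}$ is the characteristic function of $S_n/n^{1/\alpha}$, which is uniformly bounded by that of the limiting stable law plus an error controlled by the CLT rate from \cite{chen:nourdin:xu:2018}; since the limiting stable density is in $L^p$ for $p<\alpha$ and the Fourier-side error is integrable against $t^{-1-p}$ near $0$ (where $1-\mathrm{Re}\,\widehat{\mu_n}(t)\lesssim t^\alpha$) and near $\infty$, one gets the uniform bound. I would try the characteristic-function route first for $p\le 1$ and fall back on the Stein recursion if the Fourier estimates near $t=0$ prove unwieldy. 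The main obstacle throughout is obtaining genuine cancellation for $p\le 1$ without a finite first moment to lean on; the symmetry of the law and a Stein-type integration by parts are the tools that make it work.
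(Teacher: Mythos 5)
Your plan is correct in substance, but it follows a genuinely different route from the paper. The paper proves the moment bound by Stein's method: it replaces $|x|^p$ by the $\mathcal{C}^{2,p}_b$ regularization $\phi_p$, writes $\mathbb{E}[\phi_p(n^{-1/\alpha}\sum_i Y_i)]-\mathbb{E}[\phi_p(\text{stable})]$ through the Stein equation for the $\alpha$-stable Ornstein--Uhlenbeck semigroup, reduces to dimension one by conditioning on the other coordinates, and controls the generator term via an integration-by-parts identity for the Pareto density (Lemma \ref{calcul:esperance:IPP}) plus Taylor estimates, ending with a bound $C\,n^{1-2/\alpha}e^{-t/\alpha}$ integrable in $t$; this machinery is then reused for the whole normal domain of attraction in Section~\ref{sec:normal:domain}. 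Your route for $1<p<\alpha$ (truncate at level $n^{1/\alpha}$, use symmetry, Marcinkiewicz--Zygmund, then Jensen with $\mathbb{E}[(Y_1')^2]\asymp n^{(2-\alpha)/\alpha}$) is more elementary, gives the correct order $n^{p/\alpha}$ for the truncated part, and needs no semigroup estimates; what it does not buy is the quantitative comparison with the stable moment and the framework the paper recycles for non-symmetric laws. Two loose ends in your write-up should be tightened, though neither is fatal. First, the tail term is not actually dispatched: for $p>1$ the $L^p$ triangle inequality applied to $\sum_i Y_i\ind{|Y_i|>n^{1/\alpha}}$ is insufficient, but applying Marcinkiewicz--Zygmund to this (symmetric, mean-zero) tail sum and then $(\sum_i a_i)^{p/2}\le\sum_i a_i^{p/2}$ (or simply the von Bahr--Esseen inequality) yields a bound of order $n\,\mathbb{E}\bigl[|Y_1|^p\ind{|Y_1|>n^{1/\alpha}}\bigr]\asymp n^{p/\alpha}$, so the argument closes. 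Second, your identification of $p\le 1$ as the delicate case is backwards: since the bound is uniform in $n$, the case of any smaller exponent follows from one $p_0\in(1,\alpha)$ by Jensen's inequality, so the entire second half of your plan is unnecessary; alternatively, the characteristic-function identity $\mathbb{E}|X|^p=c_p\int_0^\infty t^{-1-p}(1-\operatorname{Re}\varphi_X(t))\,\mathrm{d}t$ that you mention works directly for every $p<\alpha$, using only $1-\varphi_{Y_1}(s)\le C\min(1,|s|^\alpha)$ and $1-\varphi_{Y_1}(s)^n\le\min\bigl(2,\,n(1-\varphi_{Y_1}(s))\bigr)$, with no need to invoke any CLT convergence rate.
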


This result is of independent interest in the sense that, to the best of our knowledge, this estimate is not present in the literature.
This result is not surprising however, the Pareto distributions being in the domain of attraction of the stable distribution, similar estimate are expected to hold. 

Similarly to the non-integrable case, the idea is to relate the moment of $\frac{1}{n^1/\alpha} \sum Y_i$ to the moment of a stable random variable. 
However, the lack of uniform Lipschitz property of $x\mapsto |x|^p$ at 0 prevents us from using the results of \cite{chen:nourdin:xu:2018} directly. 
We introduce the function $\phi_p$:
\begin{equation}\label{regularisation:puissance}
\phi_p(x) = \begin{cases}
|x|^p & \mbox{ if } |x| >1\\
\frac{p}{2} x^2 + (1-\frac{p}{2}) & \mbox{else}.
\end{cases}
\end{equation}
This function interpolates a parabola close to the origin and the function $x\mapsto |x|^p$.
By construction, it holds that 
\begin{equation}
\label{ineg:phi:p:puissance}
|x|^p \le 1+ \phi_p(x), 
\end{equation}
and $\phi_p \in {\cal C}^{2,p}_b$, meaning  twice differentiable with $p$-H\"older  first derivative and bounded second derivative.
We will use this function to establish the moment condition. Namely, we prove:

\begin{equation}\label{objectif:estimate}
    \E\left[\phi_p\left(\frac{1}{n^{1/\alpha}}\sum_{j=1}^n Y_i\right)\right] <+\infty.
\end{equation}

Estimate \eqref{objectif:estimate} coupled with  inequality \eqref{ineg:phi:p:puissance} on $\phi_p$ yield the moment condition of Theorem~\ref{moment:ordre:p}. 
We now focus of establishing \eqref{objectif:estimate}.

% \begin{comment}
% We have the following identity:
%     For all $F$ in ${\cal C}^{1,p}$, it holds that
%     $$
%     F(x) - F(S) = -\int_0^{+\infty} {\cal L} P_t F(x) \dif t,
%     $$
% where
% Besides, we also note that for any function $G$, we have the identity:
% $$
% G(x+u)-G(x) - u G'(x) = \int_0^1 G'(x + \theta u) - G'(x) d\theta.
% $$

% Consequently, we notice that we have:
% $$
% \E[F(X)] - \E[F(S)] = \int_{\R}
% $$

% Hence, the quantity to control is
% $$
% \int_{\R}
% $$
% \begin{proof}
% In this proof, we use bold letters to denote vectors in $\R^n$.
% Besides, capital letters will denote random variables.

% Let $\textbf{Y} = (Y_1, \dots, Y_n)$ be a vector with \textit{i.i.d.} entries, such that
% $$\mathbb{P}(Y\in \dif y) =  \frac{\alpha}{2}\frac{\dif y}{|y|^{\alpha+1}}\ind{|y| >1} .$$

% Let also $\textbf{S} = (S_1, \dots, S_n)$,
% \end{proof}
% \end{comment}
\begin{lemma}\label{calcul:esperance:IPP}
    Let $G \in \mathcal{C}^{2,p}_b(\R).$ 
    %such that $\frac{G(x)}{|x|^\alpha} \underset{ |x| \rightarrow +\infty}{\longrightarrow} 0$.
    Let $Y$ with probability density function 
    $$\mathbb{P}(Y\in \dif y) =  \frac{\alpha}{2}\frac{\dif y}{|y|^{\alpha+1}}\ind{|y| >1} .$$
    It holds that 
    $$
    \frac{2}{\alpha}\esp{ G'(Y)Y } = LG(0) - \Big( G(1) + G(-1) -2 G(0) \Big) -\int_{|y| \le 1}\Big( G(y) - G(0) - G'(0) y \Big) \frac{\alpha \dif y}{|y|^{\alpha+1}} ,
    $$
    where $L$ is the non local operator:
    $$
    L\varphi(x) = \int_{-\infty}^{+\infty} \Big(\varphi(x+y)  - \varphi(x) - \varphi'(x) y \Big)  \frac{\alpha \dif y}{|y|^{1+\alpha}}.
    $$
\end{lemma}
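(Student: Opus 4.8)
The plan is to reduce the identity to elementary one-dimensional integrations by parts against the weight $|y|^{-\alpha-1}$, and then to rearrange the resulting boundary and near-origin terms into the non-local operator $L$ evaluated at $0$. First I would use the explicit density of $Y$ to write
\begin{equation*}
\frac{2}{\alpha}\esp{G'(Y)Y}=\int_{|y|>1}G'(y)\,\frac{y}{|y|^{\alpha+1}}\,\dif y
=\int_1^{\infty}G'(y)\,y^{-\alpha}\,\dif y-\int_{-\infty}^{-1}G'(y)\,(-y)^{-\alpha}\,\dif y ,
\end{equation*}
and integrate by parts on each half-line. On $(1,\infty)$ this gives $-G(1)+\alpha\int_1^\infty G(y)y^{-\alpha-1}\dif y$, the boundary term at $+\infty$ vanishing because membership in $\mathcal C^{2,p}_b$ forces $G(y)=o(|y|^\alpha)$ (recall $p<\alpha$); the analogous computation on $(-\infty,-1)$, after the substitution $y\mapsto-y$, produces $-G(-1)+\alpha\int_1^\infty G(-y)y^{-\alpha-1}\dif y$. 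Adding the two and re-symmetrizing the weight,
\begin{equation*}
\frac{2}{\alpha}\esp{G'(Y)Y}=-\bigl(G(1)+G(-1)\bigr)+\int_{|y|>1}G(y)\,\frac{\alpha\,\dif y}{|y|^{\alpha+1}} .
\end{equation*}

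Next I would isolate the large-jump part of $LG(0)$. Splitting the integral defining $L$ at $|y|=1$, and using that $y\mapsto\operatorname{sgn}(y)|y|^{-\alpha}$ is odd (so that $\int_{|y|>1} y\,|y|^{-\alpha-1}\dif y=0$) together with $\int_{|y|>1}\alpha|y|^{-\alpha-1}\dif y=2$, one obtains
\begin{equation*}
\int_{|y|>1}\bigl(G(y)-G(0)-G'(0)y\bigr)\frac{\alpha\,\dif y}{|y|^{\alpha+1}}
=\int_{|y|>1}G(y)\,\frac{\alpha\,\dif y}{|y|^{\alpha+1}}-2G(0) ,
\end{equation*}
hence, solving for the tail integral of $G$ and substituting into the previous display, I arrive at
\begin{equation*}
\frac{2}{\alpha}\esp{G'(Y)Y}=LG(0)-\bigl(G(1)+G(-1)-2G(0)\bigr)-\int_{|y|\le1}\bigl(G(y)-G(0)-G'(0)y\bigr)\frac{\alpha\,\dif y}{|y|^{\alpha+1}} ,
\end{equation*}
which is the assertion of the lemma.

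The argument is otherwise pure bookkeeping, and the only points requiring attention are the convergence of the integrals and the vanishing of the boundary terms, for which the hypotheses $p<\alpha$, $\alpha\in(1,2)$ and $G\in\mathcal C^{2,p}_b$ are exactly tailored: near the origin the second-order Taylor estimate $|G(y)-G(0)-G'(0)y|\le\tfrac12\|G''\|_\infty y^2$ bounds the integrands over $\{|y|\le1\}$ by $O(|y|^{1-\alpha})$, integrable since $\alpha<2$; at infinity the growth of $G$ and $G'$ controlled by $\mathcal C^{2,p}_b$ makes $G(y)$ and $G'(y)y$ integrable against $|y|^{-\alpha-1}$ (using $p<\alpha$ together with $\alpha>1$) and kills the boundary terms in the integration by parts. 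I expect the only genuinely delicate point to be sign-tracking when one passes from $\operatorname{sgn}(y)|y|^{-\alpha}$ on the two half-lines to the symmetric measure $\alpha|y|^{-\alpha-1}\dif y$ on $\{|y|>1\}$, and invoking the two cancellations $\int_{|y|>1} y\,|y|^{-\alpha-1}\dif y=0$ and $\int_{|y|>1}\alpha|y|^{-\alpha-1}\dif y=2$ at the right places.
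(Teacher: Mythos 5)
Your proof is correct and follows essentially the same route as the paper's: split $\esp{G'(Y)Y}$ into the two half-lines, integrate by parts against the Pareto weight, and then use the symmetry of $\alpha|y|^{-\alpha-1}\dif y$ on $\{|y|>1\}$ to insert the compensator and peel off the $\{|y|\le 1\}$ part of $LG(0)$. The only difference is bookkeeping: the paper absorbs $G(0)$ into the boundary function $\bigl(G(y)-G(0)\bigr)y^{-\alpha}$ during the integration by parts, whereas you recover it at the end from $\int_{|y|>1}\alpha|y|^{-\alpha-1}\dif y=2$ and $\int_{|y|>1}y\,|y|^{-\alpha-1}\dif y=0$.
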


\begin{proof}
We start by expressing the left hand side using the pdf of $Y$:
\begin{eqnarray*}
\esp{ G'(Y)Y } &=& \int_{|y| >1} G'(y)y   \frac{\alpha}{2}\frac{\dif y}{|y|^{\alpha+1}}\ind{|y| >1} \\
&=& \int_{1}^{+\infty} G'(y)y   \frac{\alpha}{2}\frac{\dif y}{y^{\alpha+1}} + \int_{-\infty}^{-1} G'(y)y   \frac{\alpha}{2}\frac{\dif y}{(-y)^{\alpha+1}} \\
&=&E_+ + E_-.
\end{eqnarray*}

By integration by parts, we have:
\begin{eqnarray*}
E_+ &=& \int_{1}^{+\infty} G'(y)   \frac{\alpha}{2}\frac{\dif y}{y^{\alpha}} =  \frac{\alpha}{2}\left[ \frac{G(y) - G(0)}{y^\alpha} \right]_1^{+\infty} -  \frac{\alpha}{2}\int_{1}^{+\infty} \Big( G(y) - G(0) \Big) \frac{(-\alpha)}{y^{\alpha+1}}\dif y\\
&=& \frac{\alpha}{2} \left( \alpha\int_{1}^{+\infty} \Big( G(y) - G(0) \Big) \frac{1}{y^{\alpha+1}}\dif y  - \Big( G(1)-G(0) \Big)  \right) .
\end{eqnarray*}

Similarly, for the negative side, we have:
\begin{eqnarray*}
E_- &=& -\int_{-\infty}^{-1} G'(y)   \frac{\alpha}{2}\frac{\dif y}{(-y)^{\alpha}} = -\frac{\alpha}{2}\left[ \frac{G(y) - G(0)}{(-y)^\alpha} \right]_{-\infty}^{-1} + \frac{\alpha}{2}\int_{-\infty}^{-1} \Big( G(y) - G(0) \Big) \frac{(-1)(-\alpha)}{(-y)^{\alpha+1}}\dif y\\
&=&\frac{\alpha}{2} \left( \alpha\int_{-\infty}^{-1} \Big( G(y) - G(0) \Big) \frac{1}{(-y)^{\alpha+1}}\dif y - \Big(G(-1) - G(0) \Big)\right).
\end{eqnarray*}

We can group the two integrals together writing:
$$
\int_{-\infty}^{-1} \Big( G(y) - G(0) \Big) \frac{(-1)(-\alpha)}{(-y)^{\alpha+1}}\dif y + \int_{-\infty}^{-1} \Big( G(y) - G(0) \Big) \frac{1}{(-y)^{\alpha+1}}\dif y = \int_{|y| >1}\Big( G(y) - G(0) \Big) \frac{\dif y}{|y|^{\alpha+1}}.
$$
Besides, since the distribution is symmetric, we can add $G'(0)y$ under the integral without changing its value:
$$
 \alpha\int_{|y| >1}\Big( G(y) - G(0) \Big) \frac{\dif y}{|y|^{\alpha+1}} =  \int_{|y| >1}\Big( G(y) - G(0) - G'(0) y \Big) \frac{\alpha \dif y}{|y|^{\alpha+1}}.
$$
Adding the integral for $|y|\le 1$, we get:
$$
\alpha\int_{|y| >1}\Big( G(y) - G(0) \Big) \frac{\dif y}{|y|^{\alpha+1}}  =
LG(0) -  \int_{|y| \le 1}\Big( G(y) - G(0) - G'(0) y \Big) \frac{\alpha \dif y}{|y|^{\alpha+1}}.
$$
Consequently, we finally have:
$$
\esp{ G'(Y)Y } =  \frac{\alpha}{2} \left( LG(0) - \Big( G(1) + G(-1) -2 G(0) \Big) - \int_{|y| \le 1}\Big( G(y) - G(0) - G'(0) y \Big) \frac{\alpha \dif y}{|y|^{\alpha+1}} \right).
$$
\end{proof}
\begin{proof}[Proof of Theorem~\protect\ref{moment:ordre:p}]
Consequently, we see that we can bound $\esp{ G'(Y)Y }$ with the second derivative of $G$. Let us now define some notations.
Recall the function $\phi_p$ defined in\eqref{regularisation:puissance} above. 
We use bold letters to denote vectors in $\R^n$.
Besides, capital letters will denote random variables. 

Let $\textbf{Y} = (Y_1, \dots, Y_n)$ be a vector with IID entries, such that
$$\mathbb{P}(Y_i\in \dif y) =  \frac{\alpha}{2}\frac{\dif y}{|y|^{\alpha+1}}\ind{|y| >1} .$$

Let also $\textbf{S} = (S_1, \dots, S_n)$ be a stable process in $\R^n$ with independent coordinates.  For any function $F\in \mathcal{C}^{2,p}_b$, it holds that
\begin{equation}\label{Stein:Equation:multidim}
\esp{F(\textbf{Y})} - \esp{F(\textbf{S}) }= - \int_0^{+\infty} \esp{ {\cal L}P_t F(\textbf{Y})}\dif t,
\end{equation}
where ${\cal L}$ is the generator of the stable driven Ornstein-Uhlenbeck process:
$$
\dif\textbf{X}_t = -\frac{1}{\alpha} \textbf{X}_t \dif t + \dif \textbf{S}_t,
$$

We note here that the generator ${\cal L}$ has the expression:
$$
{\cal L} \varphi(\textbf{x}) = -\frac{1}{\alpha} \nabla \varphi(\textbf{x}) \cdot \textbf{x} + \sum_{i=1}^n \int_{\R} \Big(F(\textbf{x} + u \textbf{e}_i) - F(\textbf{x}) - \nabla F(\textbf{x}) \cdot u \Big)\frac{1}{|u|^{\alpha +1}}\dif u,
$$
where $\textbf{e}_i$ is the canonical basis element of $\R^n$.

We use equation \ref{Stein:Equation:multidim} with 
$$F(\textbf{x}) = \phi_p\left(\frac{x_1 + \dots + x_n}{n^{1/\alpha}}\right).$$ 
This leads us to the identity:
\begin{equation}\label{stein:prelim:phip}
\esp{\phi_p\left(\frac{Y_1 + \dots + Y_n}{n^{1/\alpha}}\right) } -
\esp{\phi_p\left(\frac{S_1 + \dots + S_n}{n^{1/\alpha}}\right) }=- \int_0^{+\infty} \esp{ {\cal L}P_t F(\textbf{Y})}\dif t.
\end{equation}
Now, since $\textbf{S} = (S_1, \dots, S_n)$ be a stable process in $\R^n$, it has finite moments of order $p<\alpha$.
Thus, equation \eqref{objectif:estimate} will hold as soon as we control the right hand side of \eqref{stein:prelim:phip}.
Hence, we need to control:
$$
E=\sum_{i=1}^n\esp{ -\frac{1}{\alpha}\partial_{y_i} G(\textbf{Y}) Y_i + \int_{\R} \Big(G(\textbf{Y} + u \textbf{e}_i) - G(\textbf{Y}) - \nabla G(\textbf{Y}) \cdot u \Big)\frac{1}{|u|^{\alpha +1}}\dif u },
$$
where $G=P_tF$. Let us denote for $i=1,\dots,n$ 
$$
{G}_{\backslash Y_i}(y) 
= G\Big(Y_1,\dots, Y_{i-1},y,Y_{i+1},\dots, Y_n\Big).
$$
In other words, ${G}_{\backslash Y_i}$ is the (random) function $G$ where we replace the $i$-th component by the variable $y\in \R$. 
By conditioning in the sum by $Y_1,\dots, Y_{i-1},Y_{i+1},\dots, Y_n$, it holds that
$$
E= \sum_{i=1}^n\esp{ -\frac{1}{\alpha} {G}_{\backslash Y_i}'(Y_i) Y_i + \int_{\R} \Big({G}_{\backslash Y_i}(Y_i + u) - {G}_{\backslash Y_i}(Y_i) - {G}_{\backslash Y_i}'(Y_i) \cdot u \Big)\frac{1}{|u|^{\alpha +1}}\dif u }.
$$
These notations allows us to reduce to the once dimensional case, and use Lemma \ref{calcul:esperance:IPP} above.
Namely, we deduce that:
\begin{multline*}
\esp{  {G}_{\backslash Y_i}'(Y_i) Y_i}=
\textbf{E}\left[\vphantom{\int_{|y| \le 1}}\frac{\alpha}{2} \left( L{G}_{\backslash Y_i}(0) - \Big( {G}_{\backslash Y_i}(1) + {G}_{\backslash Y_i}(-1) -2 {G}_{\backslash Y_i}(0) \Big) \right.\right.\\
 \left.\left.- \int_{|y| \le 1}\Big( {G}_{\backslash Y_i}(y) - {G}_{\backslash Y_i}(0) - {G}_{\backslash Y_i}'(0) y \Big) \frac{\alpha \dif y}{|y|^{\alpha+1}} \right) \right].
\end{multline*}
Recall $G=P_t F$, we need to control
\begin{multline*}
\esp{ {\cal L} P_tF(\textbf{Y}) }= \sum_{i=1}^n \esp{-\frac{1}{\alpha} {G}_{\backslash Y_i}'(Y_i) Y_i
+ LG(Y_i)}\\
= \sum_{i=1}^n
\E \left(\frac{1}{2} \left( -L{G}_{\backslash Y_i}(0) +  \Big( {G}_{\backslash Y_i}(1) + {G}_{\backslash Y_i}(-1) -2 {G}_{\backslash Y_i}(0) \Big) \right.\right.\\
 \left.\left.+ \int_{|y| \le 1}\Big( {G}_{\backslash Y_i}(y) - {G}_{\backslash Y_i}(0) - {G}_{\backslash Y_i}'(0) y \Big) \frac{\alpha \dif y}{|y|^{\alpha+1}}\right) + LG(Y_i)  \right).
\end{multline*}

Looking at this identity, it appears that the quantity we need to focus on is
$L{G}_{\backslash Y_i}(Y_i) - L{G}_{\backslash Y_i}(0) $, all other terms can be
controlled by $\Vert G'' \Vert_{\infty}$ from a Taylor's expansion.
We have:
\begin{multline*}
L{G}_{\backslash Y_i}(Y_i) - L{G}_{\backslash Y_i}(0) =  \int_{-\infty}^{+\infty} \Big( {G}_{\backslash Y_i}(Y_i+y) - {G}_{\backslash Y_i}(Y_i) - {G}_{\backslash Y_i}'(Y_i) y \Big) \frac{\alpha \dif y}{|y|^{\alpha+1}}\\
    -\int_{-\infty}^{+\infty} \Big( {G}_{\backslash Y_i}(y) - {G}_{\backslash Y_i}(0) - {G}_{\backslash Y_i}'(0) y \Big) \frac{\alpha \dif y}{|y|^{\alpha+1}}\cdotp
\end{multline*}
We split this integral between $\{|y| \le 1\}$, where we can do a Taylor's expansion, and $\{|y| > 1\}$:
\begin{multline*}
L{G}_{\backslash Y_i}(Y_i) - L{G}_{\backslash Y_i}(0) \\
   \shoveleft{ =  \int_{\{|y| \le 1\}} \Big( \big({G}_{\backslash Y_i}(Y_i+y)-{G}_{\backslash Y_i}(y)\big) - \big({G}_{\backslash Y_i}(Y_i)-{G}_{\backslash Y_i}(0) \big) - \big({G}_{\backslash Y_i}'(Y_i)- {G}_{\backslash Y_i}'(0)\big) y \Big) \frac{\alpha \dif y}{|y|^{\alpha+1}}}\\
\shoveright{     + \int_{\{|y| > 1\}} \Big( \big({G}_{\backslash Y_i}(Y_i+y)-{G}_{\backslash Y_i}(y)\big) - \big({G}_{\backslash Y_i}(Y_i)-{G}_{\backslash Y_i}(0) \big) - \big({G}_{\backslash Y_i}'(Y_i)- {G}_{\backslash Y_i}'(0)\big) y \Big) \frac{\alpha \dif y}{|y|^{\alpha+1}}}\\
    = I +I\!\!I.
\end{multline*}

We now turn to the second integral above. First, noticing that on $\{|y| > 1\}$, the measure $\frac{\alpha \dif y}{|y|^{\alpha+1}}$ is finite and symmetric, we can cancel the compensation term. Next, we have:
\begin{eqnarray*}
{G}_{\backslash Y_i}(Y_i+y)-{G}_{\backslash Y_i}(y) &=& \int_0^1 {G}_{\backslash Y_i}'(y+ \theta Y_i)\cdot Y_i  \dif\theta,\\
{G}_{\backslash Y_i}(Y_i)-{G}_{\backslash Y_i}(0) &=& \int_0^1 {G}_{\backslash Y_i}'(0+\theta Y_i)\cdot Y_i \dif \theta.
\end{eqnarray*}

This gives us:
$$
I\!\!I =  \int_{\{|y| > 1\}} \int_0^1 \Big({G}_{\backslash Y_i}'(y+ \theta Y_i)- {G}_{\backslash Y_i}'(\theta Y_i) \Big) Y_i \dif\theta  \frac{\alpha \dif y}{|y|^{\alpha+1}}.
$$
Now, we do an additional Taylor's expansion, writing that
$$
{G}_{\backslash Y_i}'(y+ \theta Y_i)- {G}_{\backslash Y_i}'(\theta Y_i)= \int_0^1 {G}_{\backslash Y_i}''(\theta Y_i + \mu y) y d\mu, 
$$
which can be bounded,  since $\ind{|y| > 1}\frac{\alpha \dif y}{|y|^{\alpha+1}}$ integrates $|y|$ at infinity. This gives us the following estimate:
$$
\esp{I\!\!I } \le \Vert G'' \Vert_\infty \esp{|Y_1|}.
$$
\begin{remark}
    We point out that this last estimate is rather tricky. 
    Doing bluntly a second order Taylor estimation on ${G}_{\backslash Y_i}$ would not work here because $y^2$ is not integrable at infinity against  $\frac{\alpha \dif y}{|y|^{\alpha+1}}$. Instead, here, the second increment $y$ is replaced by $Y_i$, which can be estimated.
\end{remark}

For the sake of completeness, let us write the estimate we obtain for $I$:
$$
\esp{I} \le \Vert G'' \Vert_\infty \frac{1}{2-\alpha}\cdotp
$$

Hence, we managed to relate every terms in \eqref{Stein:Equation:multidim} to the derivatives of $G$. 
We have the following lemma:
\begin{lemma}
    For any $F\in {\cal C}^{2,p}_b$, it holds that
    \begin{itemize}
        \item $\nabla P_t F(x) \le e^{-t/\alpha} P_t(\nabla F)(x)$.
        \item If $\nabla F$ is Lipschitz, then $\nabla P_tF$ is Lipschitz, with
        $$|\nabla P_t F(x) - \nabla P_t F(y) |\le C e^{-t/ \alpha}|x-y|^{\beta-1}.$$
    \end{itemize}
\end{lemma}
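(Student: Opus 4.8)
The plan is to exploit the explicit (Mehler--type) solution of the linear Ornstein--Uhlenbeck dynamics. Solving $\dif\mathbf{X}_t = -\tfrac{1}{\alpha}\mathbf{X}_t\,\dif t + \dif\mathbf{S}_t$ started from $x$ by variation of constants gives $\mathbf{X}_t^{x} = e^{-t/\alpha}x + \mathbf{Z}_t$, where $\mathbf{Z}_t = \int_0^t e^{-(t-s)/\alpha}\,\dif\mathbf{S}_s$ is a stable random vector independent of $x$, with finite moments of every order $p<\alpha$. Hence
$$P_tF(x) = \esp{F\big(e^{-t/\alpha}x + \mathbf{Z}_t\big)},$$
so that the initial condition enters $P_t$ only through the deterministic contraction $x\mapsto e^{-t/\alpha}x$.

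For the first item, note that $F\in\mathcal{C}^{2,p}_b$ makes $\nabla F$ bounded and continuous, so $x\mapsto F(e^{-t/\alpha}x+\mathbf{Z}_t)$ is $\mathcal{C}^1$ with gradient $e^{-t/\alpha}\nabla F(e^{-t/\alpha}x+\mathbf{Z}_t)$, dominated by the constant $e^{-t/\alpha}\|\nabla F\|_\infty$. Differentiating under the expectation (dominated convergence) yields
$$\nabla P_tF(x) = e^{-t/\alpha}\,\esp{\nabla F\big(e^{-t/\alpha}x+\mathbf{Z}_t\big)} = e^{-t/\alpha}\,P_t(\nabla F)(x),$$
and Jensen's inequality gives $|\nabla P_tF(x)|\le e^{-t/\alpha}P_t(|\nabla F|)(x)$, which is the first assertion.

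For the second item I would use the same identity componentwise:
$$\nabla P_tF(x) - \nabla P_tF(y) = e^{-t/\alpha}\,\esp{\nabla F\big(e^{-t/\alpha}x+\mathbf{Z}_t\big) - \nabla F\big(e^{-t/\alpha}y+\mathbf{Z}_t\big)}.$$
If $\nabla F$ is $(\beta-1)$-H\"older with seminorm $[\nabla F]_{\beta-1}$ (in particular if it is Lipschitz, $\beta-1=1$), the integrand is bounded by $[\nabla F]_{\beta-1}\,|e^{-t/\alpha}(x-y)|^{\beta-1}\le [\nabla F]_{\beta-1}\,|x-y|^{\beta-1}$, since $e^{-t/\alpha}\le1$ and $\beta-1>0$; therefore
$$\big|\nabla P_tF(x) - \nabla P_tF(y)\big| \le [\nabla F]_{\beta-1}\,e^{-t/\alpha}\,|x-y|^{\beta-1},$$
which is the announced bound with $C = [\nabla F]_{\beta-1}$ (one may even retain the sharper factor $e^{-\beta t/\alpha}$).

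The only point requiring any care is the interchange of gradient and expectation, which is immediate here thanks to the boundedness and continuity of $\nabla F$; everything else is the structural observation that $P_t$ acts on the initial datum through the linear contraction $e^{-t/\alpha}\mathrm{Id}$ alone, so I do not expect a real obstacle.
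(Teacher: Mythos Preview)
Your argument is correct and is exactly the standard route via the Mehler representation of the stable-driven Ornstein--Uhlenbeck semigroup; in fact you obtain the identity $\nabla P_tF = e^{-t/\alpha}P_t(\nabla F)$, which is sharper than the inequality stated. The paper itself states this lemma without proof, so there is nothing to compare against; your derivation is the natural one and would serve as the missing justification.
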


Recalling $G=P_tF$, derivatives of $G$ actually yields an additional $1/n^{1/\alpha}$ factor. 
Hence, our final estimate is the following:
%$$E= \sum_{i=1}^n\E \left(  {G}_{\backslash Y_i}'(Y_i) Y_i + \int_{\R} \Big({G}_{\backslash Y_i}(Y_i + u) - {G}_{\backslash Y_i}(Y_i) - {G}_{\backslash Y_i}'(Y_i) \cdot u \Big)\frac{1}{|u|^{\alpha +1}}du \right).$$

\begin{multline*}
\esp{ {\cal L} P_tF(\textbf{Y}) } = \sum_{i=1}^n \esp{-\frac{1}{\alpha} {G}_{\backslash Y_i}'(Y_i) Y_i
+ LG(Y_i)}\\
= \sum_{i=1}^n
\textbf{E}\left[\frac{1}{2} \left( -L{G}_{\backslash Y_i}(0) +  \Big( {G}_{\backslash Y_i}(1) + {G}_{\backslash Y_i}(-1) -2 {G}_{\backslash Y_i}(0) \Big) \right.\right.\\
 \left.\left.+ \int_{|y| \le 1}\Big( {G}_{\backslash Y_i}(y) - {G}_{\backslash Y_i}(0) - {G}_{\backslash Y_i}'(0) y \Big) \frac{\alpha \dif y}{|y|^{\alpha+1}}\right) + LG(Y_i)  \right].
\end{multline*}

Finally, in this sum, we bound each term by $\Vert G'' \Vert_\infty = \frac{1}{n^{2/\alpha}} e^{-t/\alpha} \Vert \phi_p''\Vert_\infty $. Now recall that $\phi_p$ interpolates between a 2nd order polynomial and $x\mapsto x^p$, with $p\le 2$, hence, its second derivative is bounded.

This gives us the upper bound:
\begin{equation}\label{lem-maj-L}
|E| \le C \sum_{i=1}^n  \frac{1}{n^{2/\alpha}} e^{-t/\alpha} \Vert \phi_p''\Vert_\infty= C n^{1-\frac{2}{\alpha}} e^{-t/\alpha}.
\end{equation}

The presence of the exponential term allows us to integrate in $t$ from $0$ to
$\infty$, and finally, the proof is complete.
\end{proof}
Note that we also established the following lemma:
\begin{lemma}\label{lem-maj-L}
Let $G$ be a $C^{2,p}_b$ function with bounded derivative then,
\begin{align}
\left|\int_{|u| \leq 1} [G(u+y)-G(y) -uG'(y)]\frac{1}{|u|^{\alpha+1}}\dif u\right| &\leq \frac{2}{2-\alpha} \|G"\|_{\infty},\\
\left| L(G)(y) -L(G)(0)\right|& \leq C_{\alpha} \|G"\|_{\infty} [1+|y|],~~\forall y \in {\mathbb R}.
\end{align}
\end{lemma}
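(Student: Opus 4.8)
Both estimates are, in essence, byproducts of the computation already carried out in the proof of Theorem~\ref{moment:ordre:p}; I would present them as follows.

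For the first bound, the plan is to invoke Taylor's theorem on $\{|u|\le 1\}$. Writing the first-order remainder in mean-value form, for each $|u|\le 1$ there is $\xi$ between $y$ and $y+u$ with $G(u+y)-G(y)-uG'(y)=u\bigl(G'(\xi)-G'(y)\bigr)$, whence $|G(u+y)-G(y)-uG'(y)|\le \|G''\|_\infty\, u^2$. It then remains to integrate against $|u|^{-\alpha-1}$ and use the elementary identity $\int_{|u|\le 1}|u|^{1-\alpha}\dif u=\tfrac{2}{2-\alpha}$, which is finite because $\alpha<2$; this yields exactly the announced $\tfrac{2}{2-\alpha}\|G''\|_\infty$.

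For the second bound, I would start from $L(G)(y)-L(G)(0)=\int_\R\bigl[(G(y+u)-G(u))-(G(y)-G(0))-(G'(y)-G'(0))u\bigr]\tfrac{\alpha\dif u}{|u|^{1+\alpha}}$ and split the domain at $|u|=1$. On $\{|u|\le 1\}$, I would set $H(u)=G(y+u)-G(u)$ and note that the integrand is $H(u)-H(0)-H'(0)u$; since $H''(v)=G''(y+v)-G''(v)$ obeys $|H''|\le 2\|G''\|_\infty$, the same mean-value estimate gives $|H(u)-H(0)-H'(0)u|\le 2\|G''\|_\infty\, u^2$, and integration produces a $y$-free bound $\tfrac{4\alpha}{2-\alpha}\|G''\|_\infty$. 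On $\{|u|>1\}$ the measure $\tfrac{\alpha\dif u}{|u|^{1+\alpha}}$ is finite and symmetric, so the compensator $(G'(y)-G'(0))u$ integrates to zero and is dropped; for what is left I would expand \emph{in $y$ rather than in $u$}, writing $G(y+u)-G(u)=\int_0^1 G'(u+\theta y)y\,\dif\theta$ and $G(y)-G(0)=\int_0^1 G'(\theta y)y\,\dif\theta$, so that the difference equals $y\int_0^1\int_0^1 G''(\theta y+\mu u)\,u\,\dif\mu\,\dif\theta$ and is bounded by $\|G''\|_\infty|y||u|$. Using $\int_{|u|>1}|u|^{-\alpha}\dif u=\tfrac{2}{\alpha-1}$ (finite because $\alpha>1$), this contributes at most $\tfrac{2\alpha}{\alpha-1}\|G''\|_\infty|y|$, and adding the two pieces gives the claim with $C_\alpha=\max\bigl(\tfrac{4\alpha}{2-\alpha},\tfrac{2\alpha}{\alpha-1}\bigr)$.

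The only genuinely delicate point — the one already flagged in the remark after the estimate of $I\!\!I$ in the proof of Theorem~\ref{moment:ordre:p} — is the far region $\{|u|>1\}$: a blunt second-order Taylor expansion of $G$ in the variable $u$ is not permitted there, since $u^2\notin L^1\bigl(\ind{|u|>1}|u|^{-1-\alpha}\dif u\bigr)$. The resolution, as above, is to use symmetry to kill the compensator and then expand in the frozen variable $y$, trading the non-integrable $u^2$ for the integrable $|y||u|$. Everything else is a routine application of Taylor's theorem together with the two explicit integrals, both of which converge precisely because $\alpha\in(1,2)$.
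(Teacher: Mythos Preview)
Your proposal is correct and follows essentially the same route as the paper: the paper explicitly states that this lemma was ``also established'' within the proof of Theorem~\ref{moment:ordre:p}, and the ingredients you isolate --- the second-order Taylor bound on $\{|u|\le 1\}$ for the first inequality, and for the second inequality the split at $|u|=1$, the symmetry argument killing the compensator on the far region, and the expansion in $y$ (rather than $u$) to trade $u^2$ for $|y||u|$ --- are precisely the estimates labelled $I$ and $I\!\!I$ there. Your write-up is in fact slightly cleaner in that it tracks the explicit constants.
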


\subsection{In the Normal domain of attraction}
\label{sec:normal:domain}
In this paragraph, we demonstrate how to establish the moment condition \eqref{moment:beta:fini} for a general random variable in a Normal Stable domain of attraction. 
We recall that for a random variable $Y$ to be in the normal domain of attraction of a stable distribution, we require its distribution function $F_y$ to satisfy:
$$
1-F_Y(t) = \P(Y \ge t) = \frac{A+ \varepsilon(t)}{t^\alpha} \mbox{ and } F_Y(-t) = \frac{A+ \varepsilon(-t)}{(-t)^\alpha},
$$
whenever $|t| \ge 1$.  
From that decomposition, we see that we can extract a Pareto component from $F_Y$.
We let $\P_Y(\dif y) = \eta \P_Z(\dif y)+ (1-\eta)\mu(\dif y)$, where $P_Z$ is the distribution of a symmetrized Pareto.

\begin{remark}
    The constant $\eta$ is chosen according to $A$ and $\alpha$ to ensure that we indeed define a probability distribution.
    In the case where the constant $\eta=1$, notice that we recover the previous symmetrized Pareto distribution. 
    This constant $\eta$ will also play a role in the generator below.
\end{remark}

Observe now that $\mu$ is simply a signed measure, for which we have the following information:
\begin{itemize}
    \item $\mu$ is centered, signed and finite
    \item $\mu(t;+\infty)= \dfrac{\varepsilon(t)}{t^\alpha}$, if $t\ge 1$, and
    $\mu(-\infty; -t]= \dfrac{\varepsilon(-t)}{(-t)^\alpha}$, it $t\le -1$
    \item $|\mu| \leq {\mathbb P}_Y + {\mathbb P}_Z \leq (1+ \|\varepsilon\|_{\infty}){\mathbb P}_Z$
\end{itemize}
\begin{prop}\label{pro-maj-gen}
For all $1<\alpha <\beta$ there exists a constant $c_{\alpha,\beta,\eta}$ such that for all 
 $Y$ a random variable with distribution in ${\mathcal D}^{\alpha,\eta}$ and $G \in {\mathcal F}^{\beta}$ and $A>1$
\begin{align*}
\esp{ {\mathcal L}^{\alpha, \eta}G(Y) }\leq c_{\alpha,\beta,\eta}
\left[ \|G"\|_{\infty} 
+ (1-\eta)(1+\|\varepsilon\|_{\infty})\left\{A^{\beta-\alpha} \|G'\|_{\beta-1,\text{Hol}} + A^{2-\alpha} \|G"\|_{\infty}\right\}
\right].
\end{align*}
\end{prop}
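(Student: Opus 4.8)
The plan is to mimic the decomposition used in the proof of Theorem~\ref{moment:ordre:p}, but now carrying along the extra signed measure $\mu$ coming from the splitting $\P_Y = \eta\,\P_Z + (1-\eta)\,\mu$. Writing $\mathcal L^{\alpha,\eta}$ as the sum of the drift part $x\mapsto -\tfrac1\alpha G'(x)x$ and the nonlocal part $L^{\alpha,\eta}G$, I would first treat $\esp{-\tfrac1\alpha G'(Y)Y}$ by conditioning exactly as in the one-dimensional Lemma~\ref{calcul:esperance:IPP}, except that the integration by parts against $\P_Y$ now produces a ``Pareto'' contribution (weighted by $\eta$, giving back $LG(0)$ plus the boundary and small-jump correction terms, each bounded by $\|G''\|_\infty$) and a ``$\mu$'' contribution. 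For the $\mu$-part one integrates by parts using the tail information $\mu(t,+\infty)=\varepsilon(t)/t^\alpha$ and its negative-axis analogue; the boundary terms at $|t|=1$ are controlled by $\|G''\|_\infty$ after subtracting $G(0)+G'(0)y$ (using that $\mu$ is centered), and the remaining integral splits at the threshold $A$.

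The central estimate is then the bound on $\big|L^{\alpha,\eta}G(Y)-L^{\alpha,\eta}G(0)\big|$ and on the $\mu$-tail integrals, both of which I would split at $|y|=A$ rather than at $|y|=1$. On $\{|y|\le A\}$ I Taylor-expand to second order: the increment $G(y)-G(0)-G'(0)y$ is $O(\|G''\|_\infty y^2)$, and since $\int_{1<|y|\le A}|y|^{2}\,|y|^{-\alpha-1}\dif y \asymp A^{2-\alpha}$ (using $\alpha<2$), this contributes the $A^{2-\alpha}\|G''\|_\infty$ term, and the $(1-\eta)(1+\|\varepsilon\|_\infty)$ prefactor appears from the third bullet $|\mu|\le(1+\|\varepsilon\|_\infty)\P_Z$. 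On $\{|y|>A\}$ a second-order expansion fails because $y^2$ is not integrable against $|y|^{-\alpha-1}$; instead I use the trick already employed for $I\!\!I$ in the proof of Theorem~\ref{moment:ordre:p}: write the double increment as $\int_0^1(G'(y+\theta Y)-G'(\theta Y))Y\,\dif\theta$ and then estimate $G'(y+\theta Y)-G'(\theta Y)$ by the H\"older modulus of $G'$, namely $\|G'\|_{\beta-1,\mathrm{Hol}}\,|y|^{\beta-1}$. Since $\beta-1<\alpha$ we do not quite get integrability of $|y|^{\beta-1}$ at infinity against $|y|^{-\alpha-1}$ on all of $\{|y|>1\}$, but the integral restricted to $\{|y|>A\}$ is $\asymp A^{\beta-\alpha}\to 0$, which is precisely where the term $A^{\beta-\alpha}\|G'\|_{\beta-1,\mathrm{Hol}}$ comes from. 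Assembling the drift part, the Pareto part of the nonlocal term (bounded by $\|G''\|_\infty$), and the two $\mu$-pieces yields the claimed inequality.

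The main obstacle is bookkeeping the $\mu$-contributions in the integration by parts so that every boundary term at $|y|=1$ is genuinely absorbed into $\|G''\|_\infty$ and that the centering of $\mu$ is used correctly to insert the compensator $G'(0)y$; a secondary subtlety is making sure the threshold $A$ is used consistently in all splits so that the $A$-dependent powers come out exactly as $A^{\beta-\alpha}$ and $A^{2-\alpha}$, with the $(1-\eta)(1+\|\varepsilon\|_\infty)$ factor attached only to the $\mu$-terms and not to the pure Pareto part. The H\"older-increment argument on $\{|y|>A\}$, together with the regularity transfer $G=P_tF\in\mathcal F^\beta$ from the previous lemma, is the part that must be done carefully, but it is a direct adaptation of the estimate on $I\!\!I$ already carried out above.
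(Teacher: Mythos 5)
Your plan follows the same route as the paper: decompose $\P_Y=\eta\,\P_Z+(1-\eta)\mu$, apply the Pareto integration by parts of Lemma \ref{calcul:esperance:IPP} to the $\eta\P_Z$-part of the drift term, and control the $\mu$-part by cutting at $|y|=1$ and $|y|=A$ with the domination $|\mu|\le(1+\|\varepsilon\|_\infty)\P_Z$; your direct Taylor bound on $\{1<|y|\le A\}$ (in place of the paper's integration by parts against $\mu(]y,+\infty[)$) yields the same $A^{2-\alpha}\|G''\|_\infty$ contribution and is fine. The first genuine problem is your proposal to split the term $\int[L G(y)-LG(0)]\dif\P_Y(y)$ at $|y|=A$ as well: that term is integrated against the full law of $Y$, so the $A$-dependent quantities it would generate do not carry the prefactor $(1-\eta)(1+\|\varepsilon\|_\infty)$, and the resulting estimate is strictly weaker than the claimed one (it already fails for $\eta=1$, where the right-hand side reduces to $c_{\alpha,\beta,\eta}\|G''\|_\infty$ alone). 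No cutoff is needed there: exactly as in the bound of $I\!\!I$ in the Pareto case (this is the content of Lemma \ref{lem-maj-L}), the double increment plus one more Taylor expansion give $|LG(y)-LG(0)|\le C_\alpha\|G''\|_\infty(1+|y|)$, because $|y|$ is integrable at infinity against $|y|^{-\alpha-1}$ when $\alpha>1$, and integrating against $\P_Y$ only costs $1+\esp{|Y|}$.

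The second problem is the far-tail estimate on $\{|y|\ge A\}$. Your integrability criterion is stated backwards: $\beta-1<\alpha$, i.e. $\beta<\alpha+1$, is exactly what makes $|y|^{\beta-1}$ integrable at infinity against $|y|^{-\alpha-1}$, and the tail integral over $\{|y|>A\}$ is then of order $A^{\beta-1-\alpha}$, not $A^{\beta-\alpha}$ (harmless, since smaller, but the reasoning must be corrected). More seriously, the double-increment representation $\int_0^1\bigl(G'(y+\theta Y)-G'(\theta Y)\bigr)Y\dif\theta$ only exists for $LG(Y)-LG(0)$; the $\mu$-part of the drift is the single integral $\int_{\{|y|\ge A\}}y[G'(y)-G'(0)]\dif\mu(y)$, for which the H\"older bound gives $\|G'\|_{\beta-1,\text{Hol}}|y|^{\beta}$, and $\int_{\{|y|\ge A\}}|y|^{\beta}\dif\P_Z(y)$ diverges when $\beta>\alpha$: truncating below at $A$ does not help, since the obstruction is at infinity, so your write-up conflates two different objects and leaves this piece unproved. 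To close it one needs the actual decay of the tail of $\mu$ (for instance an integration by parts on $\{|y|\ge A\}$ using $|\mu(]y,+\infty[)|=|\varepsilon(y)|\,y^{-\alpha}$) rather than the crude domination by $\P_Z$; note that the corresponding display in the paper's proof of Lemma \ref{lem-maj-mu-term} also uses the crude domination, so this is precisely the step that requires extra care in any complete argument.
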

\begin{proof}[Proof of Proposition \ref{pro-maj-gen}]
The main lines of the proof is similar, we first derive from Stein's equation:
$$
\esp{F(Y)}-\esp{F(S)} = \int_0^t \esp{{\cal L} P_t F(Y)}\dif t,
$$
where ${\cal L}$ is the generator of the Orstein Uhlenbeck process:
\begin{eqnarray*}
    {\cal L} \varphi(x) &=& -\frac{1}{\alpha} \varphi'(x)\cdot x + \int_{-\infty}^{+\infty}\Big( \varphi(x+y) - \varphi(x)-\varphi'(x)y \Big) \frac{\eta \alpha}{2} \frac{1}{|y|^{1+\alpha}}\dif y\\
    &=&  -\frac{1}{\alpha} \varphi'(x)\cdot x + \eta L\varphi(x)
\end{eqnarray*}
\begin{remark}
    We point out here the coefficient $\eta$ that was not present or rather equal to one in the Pareto case. 
    We can put this coefficient here, up to a modification of the Lévy measure of the stable process, and this saves a renormalisation by some coefficient $\sigma$ in Nourdin \cite{chen:nourdin:xu:2018}.
\end{remark} 

Let $Y$ be a random variable with distribution in ${\mathcal F}^{\alpha,\eta}$ and $Z$ be a random variable with symmetric Pareto distribution and $F_Y,$ $F_Z$ their distribution function. Then $F_Y -\eta F_Z$ is a function of finite bounded variation associated to a signed measure $\mu$ such that
\begin{align*}
& |\mu| \leq {\mathbb P}_Y + {\mathbb P}_Z \leq (1+ \|\varepsilon\|_{\infty}){\mathbb P}_Z,\\
&\mu( ]t,+\infty[)=\frac{\varepsilon(t)}{t^{\alpha}}~~\forall ~~t>1,\\
&\mu( ]-\infty,t])=\frac{\varepsilon(t)}{t^{\alpha}}.
\end{align*}
With these notations in hand, since $Y$ is a centered random variable
\begin{align*}
\esp{ {\mathcal L}( G) (Y)} =& -\frac{1}{\alpha} \int_{{\mathbb R}}y\left[G'(y) -G'(0)\right] \left[ \eta\dif {\mathbb P}_Z(y) + (1-\eta) d \mu(y) \right]\\
&+ \eta \int_{{\mathbb R} } L(G) (y)\dif {\mathbb P}_Y(y).
\end{align*}
Using Lemma \ref{calcul:esperance:IPP} for the integral with respect to $d{\mathbb P}_Z$ we obtain
\begin{align*}
\esp{ {\mathcal L}( G) (Y) }&= -\frac{\alpha}{2}\eta \left[ G(1) +G(-1)- 2 G(0)\right] \\
& \frac{1}{\alpha}\eta \int_{|u| \leq 1 }\left[ G(u) -G(0) -uG'(0)\right] \frac{\dif u}{|u|^{\alpha +1}}\\
&+ \eta \int_{{\mathbb R}} \left[ L(G)(y) -L(G)(0) \right]\dif {\mathbb P}_Y(y)\\
&-\frac{1}{\alpha} \int_{{\mathbb R}}y\left[G'(y) -G'(0)\right] (1-\eta)d \mu(y) .
\end{align*}
Using Lemma \ref{lem-maj-L}
the three first terms of the right member are bounded by 
$C_{\alpha, \eta} \|G"\|_{\infty}$ where $C_{\alpha,\eta}$ depends only on $\alpha$ and $\eta.$ 

For the last term, notice how when $\eta=1$, then term is not present, and the result follows from the Pareto case. 
When $\eta <1$, we shall use the following Lemma to bound the last term by for some $A>1$ to be chosen later.
\begin{align*}
 \left| \int_{{\mathbb R}}y\left[G'(y) -G'(0)\right] d \mu(y) \right| 
\leq C_{\alpha,\beta,\eta} \left[A^{\beta-\alpha} \|G'\|_{\beta,\mbox{H\"ol}} + A^{2-\alpha} \|G"\|_{\infty}\right]
 \end{align*}
 Thus this achieves the proof of Proposition \ref{pro-maj-gen}  under Lemma \ref{lem-maj-mu-term}.
\end{proof}
 
\begin{lemma}\label{lem-maj-mu-term}
Under the assumptions of Proposition \ref{pro-maj-gen}, there exists a constant $C_{\alpha,\beta,\eta} $ such that for all $G \in {\mathcal F}^{\alpha,\beta}$ and $A>1$
\begin{align*}
\left| \int_{{\mathbb R}}y\left[G'(y) -G'(0]\right] d \mu(y) \right| 
\leq C_{\alpha,\beta,\eta} \left[A^{\beta-\alpha} \|G'\|_{\beta-1,\mbox{H\"ol}} + A^{2-\alpha} \|G"\|_{\infty}\right].
\end{align*}
\end{lemma}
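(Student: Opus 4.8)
The plan is to localise at the threshold $A$: split $\int_{\R}y\,[G'(y)-G'(0)]\,\dif\mu(y)=\int_{|y|\le A}+\int_{|y|>A}$, and, since the two half-lines are treated identically, I describe the estimates on $[1,A]$ and on $[A,+\infty)$; the piece on $[-1,1]$ is harmless because $|\mu|$ is a bounded measure there and the integrand is $O(\|G''\|_\infty)$ on that set. Throughout I would use only the two facts about the defect measure $\mu$ recorded before the statement: the crude domination $\dif|\mu|(y)\le(1+\|\varepsilon\|_\infty)\tfrac{\alpha}{2}|y|^{-\alpha-1}\,\dif y$ on $\{|y|\ge1\}$, and the faster tail bound $|\mu(]t,+\infty[)|=|\varepsilon(t)|\,t^{-\alpha}\le K_\varepsilon\,t^{-\beta}$ for $t\ge1$ (with $\beta=\alpha+\gamma$), and symmetrically on the left. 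I also note that the subtraction of $G'(0)$ (harmless since $\mu$ is centred) improves the pointwise bound on the integrand: $|G'(y)-G'(0)|\le\|G''\|_\infty|y|$ near the origin, while $|G'(y)-G'(0)|\le\|G'\|_{\beta-1,\text{Hol}}\,|y|^{\beta-1}$ far out.

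On $1\le|y|\le A$ a second-order Taylor estimate gives $|G'(y)-G'(0)|\le\|G''\|_\infty|y|$, so the integrand is at most $\|G''\|_\infty|y|^2$, and against the crude density
\begin{equation*}
\Big|\int_{1\le|y|\le A}y\,[G'(y)-G'(0)]\,\dif\mu(y)\Big|\le(1+\|\varepsilon\|_\infty)\,\alpha\,\|G''\|_\infty\int_1^A y^{1-\alpha}\,\dif y\le C_\alpha\,(1+\|\varepsilon\|_\infty)\,A^{2-\alpha}\,\|G''\|_\infty,
\end{equation*}
where finiteness of the last integral is precisely the hypothesis $\alpha<2$. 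Together with the trivial bound on $[-1,1]$ (and $A^{2-\alpha}\ge1$) this yields the term $A^{2-\alpha}\|G''\|_\infty$.

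On the far region $|y|>A$ the crude density is too weak: combined with the natural bound $|G'(y)-G'(0)|\le\|G'\|_{\beta-1,\text{Hol}}\,|y|^{\beta-1}$ it would leave the divergent integral $\int_A^{+\infty}y^{\beta-\alpha-1}\,\dif y$, since $\beta>\alpha$. The remedy is to exploit the faster tail. Set $\Phi(y)=\int_y^{+\infty}s\,\dif\mu(s)$; this is well defined because $\alpha>1$, with $|\Phi(y)|\le C_\alpha(1+\|\varepsilon\|_\infty)\,y^{1-\alpha}$ from the crude density (and the sharper $|\Phi(y)|\le C_\varepsilon\,y^{1-\beta}$ from the tail). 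Since $y\,\dif\mu(y)=-\dif\Phi(y)$, one integration by parts gives
\begin{equation*}
\int_A^{+\infty}y\,[G'(y)-G'(0)]\,\dif\mu(y)=[G'(A)-G'(0)]\,\Phi(A)+\int_A^{+\infty}\Phi(y)\,G''(y)\,\dif y,
\end{equation*}
the boundary term at $+\infty$ vanishing because $G'$ is bounded and $\Phi(y)\to0$. The first term is $\le\|G'\|_{\beta-1,\text{Hol}}A^{\beta-1}\cdot C_\alpha(1+\|\varepsilon\|_\infty)A^{1-\alpha}=C_\alpha(1+\|\varepsilon\|_\infty)A^{\beta-\alpha}\|G'\|_{\beta-1,\text{Hol}}$, which is the first term in the bound; the last integral is estimated by $\|G''\|_\infty$ together with the decay of $\Phi$ and is absorbed into the $A^{2-\alpha}\|G''\|_\infty$ contribution. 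Adding the mirror estimate on $(-\infty,-A]$ gives the claim, the constant depending on $\alpha,\beta,\eta$ through $(1+\|\varepsilon\|_\infty)$ and $K_\varepsilon$.

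I expect the far-region step to be the main obstacle: one must trade the crude pointwise domination $|\mu|\le(1+\|\varepsilon\|_\infty)\mathbb{P}_Z$ --- which only ``sees'' the exponent $\alpha$ and is useless there, the H\"older growth of $G'$ having the strictly larger exponent $\beta-1$ --- against the genuinely faster tail $|\mu(]t,+\infty[)|\le K_\varepsilon t^{-\beta}$, and organise the integration by parts so that no non-integrable factor $|y|\,|G''(y)|$ survives at infinity; this is where the cut-off $A$ (to be optimised later in Proposition~\ref{pro-maj-gen}) does its work. Everything else is a routine Taylor expansion together with elementary one-variable integrals.
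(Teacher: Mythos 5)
Your decomposition (cut at $|y|=1$ and $|y|=A$) and your treatment of the two inner regions match the paper's proof in substance: on $\{|y|\le 1\}$ both arguments use Taylor plus finiteness of $\mu$, and on $\{1\le |y|\le A\}$ the paper reaches the same $A^{2-\alpha}\|G''\|_\infty$ bound by an integration by parts against the tail $\mu(]y,+\infty[)\lesssim y^{-\alpha}$, whereas you integrate the pointwise bound $\|G''\|_\infty y^2$ directly against the dominating Pareto density $|\mu|\le(1+\|\varepsilon\|_\infty){\mathbb P}_Z$; your version is correct and even a little shorter. The divergence from the paper is on $\{|y|\ge A\}$: the paper does no integration by parts there and simply inserts the $(\beta-1)$-H\"older bound for $G'$ under the domination by ${\mathbb P}_Z$, obtaining a bound with no $A$-dependence (a step which, as you rightly sense, is delicate when $\beta>\alpha$), while you introduce $\Phi(y)=\int_y^{+\infty}s\,\dif\mu(s)$, integrate by parts, and invoke the quantitative tail $|\mu(]t,+\infty[)|\le K_\varepsilon t^{-\beta}$ — an identification of the H\"older index $\beta-1$ of $G'$ with the tail index $\alpha+\gamma$ of $\varepsilon$ that the lemma does not assume (its constant is meant to depend only on $\alpha,\beta,\eta$ and $\|\varepsilon\|_\infty$, not on $K_\varepsilon$).

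The genuine gap is precisely at the point you flagged as the main obstacle. After your integration by parts the boundary term does give $C\,A^{\beta-\alpha}\|G'\|_{\beta-1,\mbox{H\"ol}}$, but the remaining term $\int_A^{+\infty}\Phi(y)G''(y)\,\dif y$ is not ``absorbed into the $A^{2-\alpha}\|G''\|_\infty$ contribution'' by the estimates you have: with the crude bound $|\Phi(y)|\le C y^{1-\alpha}$ the integral $\int_A^{+\infty}y^{1-\alpha}\dif y$ is infinite because $\alpha<2$, and with the sharper bound $|\Phi(y)|\le C_\varepsilon y^{1-\beta}$ it converges only when $\beta>2$, which is not among the hypotheses ($\beta$ is only assumed larger than $\alpha$, and in the intended application $G'$ inherits the $p$-H\"older regularity of $\phi_p'$ with $p<\alpha<2$, so $\beta=1+p$ may well be $\le 2$; likewise $\beta=\alpha+\gamma\le2$ for small $\gamma$). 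So as written this step fails in the relevant range of parameters, and some other argument is needed there — either the paper's direct insertion of the H\"older bound under the Pareto domination, or a bound using additional information on $G'$ (for instance its sup-norm), but not the decay of $\Phi$ alone.
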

\begin{proof}[Proof of Lemma \ref{lem-maj-mu-term}~:]
Let $A>1$
We split the integral in fourth terms :
\begin{align*}
\int_{{\mathbb R}}y\left[G'(y) -G'(0]\right] d \mu(y)&=\int_{\{|y| \leq 1\}}y\left[G'(y) -G'(0]\right] d \mu(y)\\
&+ \int_{\{|y| \geq A\}} y\left[G'(y) -G'(0)\right] d \mu(y)\\
&+ \int_1^A y\left[G'(y) -G'(0)\right] d \mu(y)+  \int_{-^A}^{-1} y\left[G'(y) -G'(0)\right] d \mu(y).
\end{align*}

The first one is bounded using Taylor expansion  by 
\begin{align*}
\left|\int_{\{|y| \leq 1\}}y\left[G'(y) -G'(0)\right] d \mu(y)\right| \leq \|G"\|_{\infty} |\mu({\mathbb R})|\leq 2 \|G"\|_{\infty}. 
\end{align*}

Using the fact that $G'$ is $\beta-1$ H\"older continuous and that $|\mu| \leq  (1+\|\varepsilon \|_{\infty}) {\mathbb P}_Z$ (an $\alpha$ Pareto distribution) the second term is bounded by
\begin{align*}
\left|\int_{\{|y| \geq A\}} y\left[G'(y) -G'(0)\right] \dif\mu(y)\right|
&\leq
\|G'\|_{\beta-1,\mbox{H\"ol}}(1+\|\varepsilon \|_{\infty}) \int_{\{|y| \geq A\}} |y|^{\beta}\dif {\mathbb P}_Z(y)\\
&\leq
\|G'\|_{\beta-1,\mbox{H\"ol}}(1+\|\varepsilon \|_{\infty}) \frac{ \alpha}{\beta- \alpha}\cdotp
 \end{align*}

For the third one, we perform an integration by part
\begin{align*}
\int_1^A y\left[G'(y) -G'(0)\right] \dif \mu(y)&=
 \left[ - y\left[G'(y) -G'(0)\right] \mu( ]y,+\infty[) \right]_1^A \\&
 + \int_1^A \left( \left[G'(y) -G'(0)\right] + y G"(y) \right) \mu( ]y,\infty[) \dif y.
\end{align*}
Thus  since 
$$ \left| \mu( ]y,+\infty[)\right|\leq  \frac{(1+ \|\varepsilon \|_{\infty})}{2}\frac{1}{y^{\alpha}
}$$
it is bounded by
\begin{align*}
\left| \int_1^A y\left[G'(y) -G'(0)\right] d \mu(y)\right|\leq 2\left[1+\frac{\alpha}{2-\alpha }\right] \|G"\|_{\infty}(1+ \|\varepsilon\|_{\infty} ) \left[ 1+ A^{2-\alpha}\right].
\end{align*}
The integral on $]-A,-1]$ term is bounded in the same spirit.
This achieves the proof of Lemma \ref{lem-maj-mu-term}, and thus the proof of Proposition \ref{pro-maj-gen} is complete.
\end{proof}

 Let us now return to the general case. Recall that the goal is to control
 $$
 \esp{F(\textbf{Y})} = \esp{\phi_p\left(\frac{Y_1 + \dots + Y_n}{n^{1/\alpha}}\right)},
 $$ 
where $Y_i$'s are in the normal domain of attraction.
 Hence, we need to control:
$$
E=\sum_{i=1}^n\esp{ -\frac{1}{\alpha}\partial_{y_i} G(\textbf{Y}) Y_i + \eta\int_{\R} \Big(G(\textbf{Y} + u \textbf{e}_i) - G(\textbf{Y}) - \nabla G(\textbf{Y}) \cdot u \Big)\frac{1}{|u|^{\alpha +1}}\dif u}.
$$

Keeping the notations as in the previous paragraph, we can reduce this calculation to the one dimensional case by conditioning. Let $G=P_tF$ and for $i=1,\dots,n$:
$${G}_{\backslash Y_i}(y) 
= G\Big(Y_1,\dots, Y_{i-1},y,Y_{i+1},\dots, Y_n\Big).$$
We need to control:
$$
E= \sum_{i=1}^n\esp{-\frac{1}{\alpha} {G}_{\backslash Y_i}'(Y_i) Y_i + \eta\int_{\R} \Big({G}_{\backslash Y_i}(Y_i + u) - {G}_{\backslash Y_i}(Y_i) - {G}_{\backslash Y_i}'(Y_i) \cdot u \Big)\frac{1}{|u|^{\alpha +1}}\dif u}.
$$

Now, we can plug the one-dimensional result from Proposition \ref{pro-maj-gen}:

\begin{align*}
E \leq c_{\alpha,\beta,\eta} \sum_{i=1}^n \esp{ \|G"_{\backslash Y_i}\|_{\infty}
+ (1-\eta)(1+\|\varepsilon\|_{\infty})\left\{A^{\beta-\alpha} \|G'_{\backslash Y_i}\|_{\beta-1,\mbox{H\"ol}} + A^{2-\alpha} \|G"_{\backslash Y_i}\|_{\infty}\right\} 
}.
\end{align*}

Finally, we recall that $\Vert G'' \Vert_\infty = \frac{1}{n^{2/\alpha}} e^{-2t/\alpha} \Vert \phi_p''\Vert_\infty $ and 
$$
\|G'\|_{\beta-1,\mbox{H\"ol}} \le C e^{-t/\alpha} \frac{1}{n^{\beta/\alpha}}\Vert \phi_p\Vert_\infty.
$$
This power in $n$ prompts us to chose $A = n^{\delta}$, where $\delta < \frac{1}{\alpha} $, so that 
$$
n \times A^{\beta-\alpha} \|G'_{\backslash Y_i}\|_{\beta-1,\mbox{H\"ol}} \underset{n \rightarrow +\infty} {\longrightarrow}0.
$$
A similar calculation leads to the same restriction over $\delta$ for the contribution of $A^{2-\alpha}\Vert G''_{\backslash Y_i} \Vert_{\infty}$.
This choice of $A$ lets us finally control the difference by something bounded in $n$. 
As for the integral in time, the extra $e^{-t/\alpha}$ factor allows us to integrate from 0 to $+\infty$,  thus the moment condition holds.

 \bibliography{MyLibrary.bib}
%\bibliography{sn-bibliography}% common bib file
%% if required, the content of .bbl file can be included here once bbl is generated
%%\input sn-article.bbl

\end{document}